\font\mfett=cmmib10 at11pt
\def\bgamma{\hbox{\mfett\char013}}
\def\gamra{\hbox{\mfett\char013}}
\def\bphi{\hbox{\mfett\char030}}
\newcounter{thm}
\numberwithin{thm}{section}
\numberwithin{equation}{section}
	\newtheoremstyle{myplain}		
			{}			
			{}			
			{\itshape}				
			{}				
			{\sffamily\bfseries}				
			{.}		
			{ }				
			{\thmname{#1}\thmnumber{ #2}\textnormal{\textsf{\thmnote{ (#3)}}}}			
    \newtheoremstyle{mybreak}
            {}{}{}{}{\sffamily\bfseries}{.}{\newline}
            {\thmname{#1}\thmnumber{ #2}\textnormal{\textsf{\thmnote{ (#3)}}}}
	\newtheoremstyle{mydef}
			{}{}{}{}{\sffamily\bfseries}{.}{ }
			{\thmname{#1}\thmnumber{ #2}}
	\newtheoremstyle{myrem}
			{}{}{}{}{\sffamily\itshape}{.}{ }
			{\thmname{#1}\thmnumber{ #2}}
\theoremstyle{myplain}
	\newtheorem{theorem}[thm]{Theorem}
    \newtheorem{corollary}[thm]{Corollary}
\theoremstyle{mybreak}
\theoremstyle{mydef}
	\newtheorem{remark}[thm]{Remark}
\theoremstyle{mydef}
	\newtheorem{example}[thm]{Example}
	\newcommand{\cc}{\mathbb{C}}
		\newcommand{\zz}{\mathbb{Z}}
		\newcommand{\nn}{\mathbb{N}}
	\newcommand{\rr}{\mathbb{R}}
\newcommand{\argmax}{\mathop{\mathrm{argmax}}}
\def\sumprime_#1^#2{
    \setbox0=\hbox{$\scriptstyle{#1}$}
    \setbox1=\hbox{$\scriptstyle{#2}$}
    \setbox2=\hbox{$\displaystyle{\sum}$}
    \setbox4=\hbox{${}^\prime\mathsurround=0pt$}
    \dimen0=.5\wd0 \advance\dimen0 by-.5\wd2
    \ifdim\dimen0>0pt
        \ifdim\dimen0>\wd4 \kern\wd4
        \else\kern\dimen0
        \ifdim\dimen1>\wd4 \kern\wd4
        \else\kern\dimen1
    \fi\fi\fi
\mathop{{\sum}^\prime}_{\kern-\wd4 #1}^{\kern-\wd4 #2}
}
\title{\Large ESPRIT versus ESPIRA for reconstruction of short cosine sums and its application}
\author{Nadiia Derevianko\footnote{Institute for Numerical and Applied Mathematics, G\"ottingen University, Lotzestr.\ 16-18, 37083 G\"ottingen, Germany, \{n.derevianko,plonka,r.razavi\}@math.uni-goettingen.de} \footnote{Corresponding author} \quad Gerlind Plonka$^{*}$ \quad Raha Razavi$^{*}$}
\date{\small\textsc{Dedicated to Claude Brezinski on the occasion of his 80th birthday}}
\begin{document}
	\let\oldproofname=\proofname
	\renewcommand{\proofname}{\itshape\sffamily{\oldproofname}}

\maketitle

\begin{abstract}
 In this paper we introduce two new  algorithms  for stable  approximation with and recovery of short  cosine sums. The used signal model contains cosine terms with arbitrary real positive frequency parameters and therefore strongly generalizes usual Fourier sums. The proposed methods both employ a set of equidistant signal values as input data.
The ESPRIT  method for cosine sums is a Prony-like method and applies matrix pencils of Toeplitz $+$ Hankel matrices while the ESPIRA method is based on rational approximation of DCT data and can be understood as a matrix pencil method for special Loewner matrices.  Compared to  known numerical methods  for recovery of exponential sums, the design of the considered new algorithms directly exploits the special real structure of the signal model  and therefore usually provides real parameter estimates for noisy input data, while  the known general recovery algorithms for complex exponential sums  tend to yield complex parameters in this case. \\[1ex]

\textbf{Keywords:}  sparse cosine sums, Prony method, Toeplitz$+$Hankel matrices, rational interpolation, AAA algorithm, Loewner matrices.\\
\textbf{AMS classification:}
41A20, 42A16, 42C15, 65D15, 94A12.
\end{abstract}

\section{Introduction}
\label{introduction}

We consider cosine sums of the form 
\begin{equation}\label{1.1}
f(t) = \sum_{j=1}^{M} \gamma_{j} \, \cos(\phi_{j} t),
\end{equation}
where $M \in {\mathbb N}$, $\gamma_{j} \in {\mathbb R}\setminus \{ 0\}$, and  the frequency parameters $\phi_{j}  \in [0, K)$ (with $K>0$) are pairwise distinct. We define a step size $h$ with $h=\frac{\pi}{K}$ and want to study
 the following reconstruction problems:
\begin{description}
\item{1.} How to reconstruct a function $f$ in a stable way from function values $f_{k} = f(\frac{h(2k +1)}{2})$, $k=0, \ldots, N-1$, $N > 2M$ ?
\item{2.} How to reconstruct a function $f$ in a stable way from noisy function values $y_{k} = f(\frac{h(2k +1)}{2})+ \epsilon_{k}$, $k=0, \ldots, N-1$, $N > 2M$, where the noise vector $(\epsilon_{k})_{k=0}^{N-1}$ has a Gaussian or uniform distribution with mean value $0$?
\item{3.} How to approximate an even smooth function $g$ by a short cosine sum in an efficient way?
\end{description}

The recovery of cosine sums  from a finite set of possibly corrupted signal samples as well as the approximation with short cosine sums play an important role in many signal processing problems.
Applications of sparse cosine sums can be found in sparse phase retrieval \cite{Beinert17}, and for  exact approximation of Bessel functions \cite{Cu2020}. Other applications concern the recovery of cosine sums that appear in optical models as measured spectral interferograms, as for example in optical coherence tomography \cite{Izat}.

The problem is closely related with the recovery of and approximation by sums of exponentials of the form  $\sum_{j=1}^{2M} \gamma_{j} \, {\mathrm e}^{{\mathrm i} \phi_{j} t }$, 
which has been extensively studied within the last years, see e.g.\ \cite{Osborne95, Pereyra10, VMB02, BM05, PT2013, PT14, PT15, PPST18, ZP19, PPD21, DP21, DPP21}.
Moreover, there is a close connection to the question of  extrapolation of the given sequence of input values $(f_{k})_{k=0}^{N-1}$, see e.g. \cite{Bre1,Bre2,Bre3}.

At the first glance, the known reconstruction algorithms for exponential sums in \cite{DPP21, Hua90, PT2013, PT15, RK89} seem to cover also the problems raised above, since the cosine sum $f$ in (\ref{1.1}) can be simply transferred into an exponential sum $ \frac{1}{2}\sum_{j=1}^{M} \gamma_{j}({\mathrm e}^{{\mathrm i}\phi_{j}t} + {\mathrm e}^{-{\mathrm i}\phi_{j}t})$, which is just a special case of an exponential sum of length $2M$.
Indeed, in case of exact measurement data, any reconstruction algorithm for exponential sums can be directly used for the recovery of the parameters of $f$ in (\ref{1.1}).
However, if we have noisy input data, or if we want to approximate a given function $g$ by a real cosine sum, then the known algorithms for general exponential sums usually no longer provide us a real solution that can be represented in the model (\ref{1.1}), i.e., the optimal real frequency parameters $\phi_{j}$ cannot be derived from the reconstruction of the corresponding exponential sum.

There exist a few approaches that are directly concerned with the reconstruction of cosine sums, see e.g. \cite{Beinert17,PSK19,Cu2020,SP20,KP21}, which are all based on Prony's method. An explicit recovery algorithm has been only derived in \cite{Cu2020}. In \cite{PT2014}, the reconstruction of sparse Chebyshev polynomials has been considered, which can be seen as a special case of the recovery of (\ref{1.1}) if $\phi_{j}$ are restricted to the set $c \, {\mathbb N}$ with some constant $c>0$. 


The goal of this paper is to propose two different recovery algorithms for cosine sums of the type (\ref{1.1}).
In Section 2, we start with repeating a variant of Prony's method for direct recovery of cosine sums in (\ref{1.1}) for exact input data $f_{k} = f(\frac{h(2k +1)}{2})$, $k=0, \ldots, N-1$, $N > 2M$, and derive a new ESPRIT-type algorithm to achieve better numerical stability in real arithmetics.
Our new ESPRIT algorithm for cosine sums is based on a matrix pencil method for Hankel$+$Toeplitz matrices and differs from the previous methods given in \cite{PT2014} and \cite{Cu2020}.

In a recent paper \cite{DPP21}, we had proposed an ESPIRA (Estimation of Signal Parameters by Iterative Rational Approximation) algorithm  for recovery of exponential sums. As shown in \cite{DPP21}, this new method can be successfully applied for reconstruction of exponential sums from noisy data as well as for function approximation, and it essentially outperforms all previous methods in both regards. 
In Section 3, we now present a new ESPIRA algorithm, which is especially adapted to cosine sums. For this purpose, the problem of parameter reconstruction is transferred to a problem of rational interpolation of DCT (Discrete cosine transform) transformed data. The rational interpolation problem is then solved in a stable way using the recently proposed AAA algorithm, \cite{NST18}. In Section 4, we show that the ESPIRA algorithm from Section 3 can be reinterpreted as a matrix-pencil method for special Loewner matrices. This representation enables us to derive a slightly different algorithm (called ESPIRA-II) which employs this matrix pencil approach. 
Finally, in Section 5 we show at several examples that the new ESPRIT and ESPIRA algorithms work efficiently for reconstruction, but  moreover also for function recovery from noisy data and for function approximation in double precision arithmetics.

Throughout this paper, we will use the matrix notation ${\mathbf A}_{M,N}$ for real matrices of size $M \times N$ and the submatrix notation ${\mathbf A}_{M,N}(m : n, k : \ell)$ to denote a submatrix of ${\mathbf A}_{M,N}$ with rows indexed $m$ to $n$ and columns indexed $k$ to $\ell$, where (as in Matlab) the first row and first column has index $1$ (even though the row- and column indices for the definition of the matrix may start with 0). For square matrices we often use the short notation ${\mathbf A}_{M}$ instead of ${\mathbf A}_{M,M}$ .

\section{Prony's method and ESPRIT for cosine sums}

\subsection{A variant of Prony's method for cosine sums}

We briefly summarize the approaches for reconstruction of sparse cosine sums $f$ in (\ref{1.1}) using a variant of the classical Prony method, see e.g. \cite{PSK19,SP20}. We slightly modify those results  with regard to the structure of  given function values.

\begin{theorem}\label{theo1}
Let some $K>0$ be given. Assume that $f$ is of the form $(\ref{1.1})$, where $M$ is known beforehand, and where the pairwise distinct parameters $\phi_{j} \in [0, K) \subset {\mathbb R}$ and $\gamma_{j} \in {\mathbb R}\setminus \{0\}$, $j=1, \ldots, M$, are unknown.
Then $f$ can be uniquely reconstructed  from the samples $f( \frac{h(2k+1)}{2})$, $k=0, \ldots , 2M-1$, with $h=\frac{\pi}{K}$, i.e., all parameters of $f$ can be uniquely recovered.
\end{theorem}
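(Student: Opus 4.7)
The plan is to cast this as a Prony-type recovery problem. First I would normalize by defining $\alpha_j := \phi_j h \in [0,\pi)$, so that
\[
f_k = \sum_{j=1}^M \gamma_j \cos\bigl((k+\tfrac12)\alpha_j\bigr),
\]
and extend the sequence by evenness via $f_{-m} := f_{m-1}$. Since $\cos$ is injective on $[0,\pi)$, the values $x_j := \cos\alpha_j$ are pairwise distinct in $(-1,1]$; these will be the roots of a Prony polynomial.

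The key identity comes from the product-to-sum formula:
\[
\tfrac{1}{2}(f_{k+\ell}+f_{k-\ell}) \;=\; \sum_{j=1}^M \gamma_j\,\cos(\ell\alpha_j)\cos\bigl((k+\tfrac12)\alpha_j\bigr) \;=\; \sum_{j=1}^M \gamma_j\, T_\ell(x_j)\cos\bigl((k+\tfrac12)\alpha_j\bigr),
\]
where $T_\ell$ denotes the Chebyshev polynomial of the first kind. Defining $p(x) := \prod_{j=1}^M (x-x_j)$ and expanding it in the Chebyshev basis as $p(x) = \sum_{\ell=0}^M p_\ell T_\ell(x)$, the vector $(p_\ell)_{\ell=0}^M$ annihilates the $M\times(M+1)$ Hankel$+$Toeplitz matrix $\mathbf{H}$ with entries $\tfrac{1}{2}(f_{k+\ell}+f_{k-\ell})$, $0\le k\le M-1$, $0\le\ell\le M$. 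All required negative-index samples lie in $\{f_0,\dots,f_{M-1}\}$ via the even extension, so the given $2M$ values suffice.

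The main obstacle is proving that $\mathbf{H}$ has rank exactly $M$, so that $(p_\ell)$ is uniquely determined up to scalar. I would do this through the factorization $\mathbf{H} = \mathbf{C}\,\diag(\gamma_j)\,\mathbf{V}^\top$, where $\mathbf{C}_{k,j}=\cos((k+\tfrac12)\alpha_j)$ is $M\times M$ and $\mathbf{V}_{\ell,j}=T_\ell(x_j)$ is $(M+1)\times M$. The Chebyshev--Vandermonde $\mathbf{V}$ has rank $M$ by distinctness of the $x_j$. For $\mathbf{C}$ I would invoke the identity $\cos((k+\tfrac12)\alpha) = T_{2k+1}(\cos(\alpha/2))$: a nontrivial linear dependence among its columns would produce a nonzero odd polynomial of degree at most $2M-1$ vanishing at the $M$ pairwise distinct positive points $w_j := \cos(\alpha_j/2) \in (0,1]$, hence also at $-w_j$ and at $0$, for $2M+1$ zeros in total, a contradiction.

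Once rank $M$ is established, the kernel of $\mathbf{H}$ is one-dimensional and determines $p$ up to scalar. Its roots recover the $x_j$, and thus $\alpha_j = \arccos x_j$ (unique in $[0,\pi)$) and $\phi_j = (K/\pi)\alpha_j$. Finally, the coefficients $\gamma_j$ follow uniquely from the linear system $\mathbf{C}\,\bgamma = (f_0,\dots,f_{M-1})^\top$, which is invertible by the very same argument.
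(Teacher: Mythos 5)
Your proof is correct and follows essentially the same route as the paper: the Prony polynomial expanded in the Chebyshev basis, the product-to-sum identity yielding the annihilation relations, and the factorization of the Toeplitz$+$Hankel matrix into generalized Vandermonde factors times $\diag(\gamma_j)$. You in fact go one step further than the paper by actually proving nonsingularity of $\bigl(\cos((k+\tfrac12)\alpha_j)\bigr)_{k,j}$ via the odd-polynomial argument with $T_{2k+1}(\cos(\alpha_j/2))$ --- the paper only asserts this ``by assumption'' --- though note that the dependence producing that polynomial is one among the rows rather than the columns, which is of course equivalent for a square matrix.
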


\begin{proof}
We define the (characteristic) polynomial
$$ p(z) = \prod_{j=1}^{M} \left(z- \cos ( \phi_{j} h) \right) $$
of degree $M$, where by assumption the zeros $\cos( \phi_{j} h )$ are pairwise distinct. Then $p(z)$ can be rewritten as
\begin{equation}\label{poly}
 p(z) = \sum_{\ell=0}^{M} p_{\ell} \, T_{\ell}(z), 
 \end{equation}
where $T_{\ell}(z)$, $\ell \in {\mathbb N}_{0}$,  denotes the Chebyshev polynomial of first kind of degree $\ell$, which is for $z\in [-1,1]$ given by $T_{\ell}(z) = \cos (\ell\, \arccos z)$. For $\ell \ge 1$  the leading  coefficient of $T_{\ell}$ is  $2^{\ell-1}$, therefore it follows that $p_{M} = 2^{-M+1}$. 

Observe that $f$ in (\ref{1.1}) is an even function, therefore we have 
$$ \textstyle f_{k} :=f\left(\frac{h(2k+1)}{2}\right) = f\left(-\frac{h(2k+1)}{2}\right) = f_{-k-1}, \qquad k=0, \ldots , 2M-1. $$
The coefficients $p_{\ell}$ of the polynomial $p(z)$ in (\ref{poly}) satisfy for $m=0, \ldots , M-1$, the equations
\begin{align}
\nonumber
 \sum_{\ell=0}^{M} p_{\ell} (f_{m+\ell} + f_{m-\ell}) 
 =&  \sum_{\ell=0}^{M} p_{\ell} \, \sum_{j=1}^{M} \textstyle \gamma_{j} \left( \cos\left(\left(\frac{\phi_{j} h (1+2(m+\ell))}{2}\right)\right)  + \cos\left( \left(\frac{\phi_{j} h(1+2(m-\ell))}{2}\right) \right) \right) \\
 \nonumber
  =&  \sum_{\ell=0}^{M} p_{\ell} \, \sum_{j=1}^{M} \textstyle \gamma_{j} \, 2 \, \textstyle \left( \cos(  (\frac{\phi_{j} h(1+2m)}{2})) \, \cos (\phi_{j} h \ell) \right) \\
  \nonumber
=&  2  \sum_{j=1}^{M} \gamma_{j} \, {\textstyle{\cos\left(  \left(\frac{\phi_{j} h(1+2m)}{2}\right)\right)}} \sum_{\ell=0}^{M} p_{\ell} \, \textstyle \cos(\phi_{j} h \ell) \\
\label{diff}
=& 2  \sum_{j=1}^{M} \gamma_{j} \, {\textstyle{\cos\left(  \left(\frac{\phi_{j} h(1+2m)}{2}\right)\right)}} \sum_{\ell=0}^{M} \textstyle p_{\ell} \, T_{\ell}(\cos (\phi_{j} h)) =0.
\end{align}
Using the known function values $f_{k}$, $k=-2M, \ldots , 2M-1$,  we define the $M \times M$-matrix
$${\mathbf M}_{M} := \left( f_{m+\ell} + f_{m - \ell}
 \right)_{m,\ell=0}^{M-1}. $$
Further, let ${\mathbf p} :=(p_{0}, \ldots , p_{M-1})^{T}$ be the vector of polynomial coefficients in (\ref{poly}). Then, with $p_{M}= 2^{-M+1}$, (\ref{diff}) yields  the equation system 
\begin{equation} \label{sys1}
{\mathbf M}_{M} \, {\mathbf p} = - 2^{-M+1} \, \left( f_{m+M} + f_{m-M} 
\right)_{m=0}^{M-1}. 
\end{equation}
The matrix ${\mathbf M}_{M}$ has Toeplitz$+$Hankel structure. It is invertible, since we obtain the factorization
\begin{align*}
{\mathbf M}_{M}  &= \left( \sum_{j=1}^{M} \gamma_{j} \textstyle \left(  \cos \left(\phi_{j} h \left(\frac{1+2(m+\ell)}{2}\right)\right)  + \cos\left(\phi_{j} h \left(\frac{1+2(m-\ell)}{2}\right)\right) \right) \right)_{m,\ell=0}^{M-1} \\
&= 2 \left( \sum_{j=1}^{M} \gamma_{j}  \, \textstyle \left( \cos \left( \phi_{j} h \left(\frac{1+2m}{2}\right) \right) \, \cos( \phi_{j} h \ell) \right) \right)_{m,\ell=0}^{M-1} \\
&= 2 \, {\mathbf V}_{M}^{(1)} \, \textrm{diag} (\gamma_{j})_{j=1}^{M} \, ({\mathbf V}_{M}^{(2)})^{T} 
\end{align*}
with the generalized Vandermonde matrices 
$$ \textstyle{\mathbf V}_{M}^{(1)} = \left( \cos \left(  \left(\frac{\phi_{j} h(1+2m)}{2} \right) \right) \right)_{m=0,j=1}^{M-1,M}, \qquad 
 {\mathbf V}_{M}^{(2)} = \left(\cos(\phi_{j} h \ell) \right)_{\ell=0,j=1}^{M-1,M}.
$$
The two Vandermonde matrices  ${\mathbf V}_{M}^{(1)}$ and ${\mathbf V}_{M}^{(2)}$ as well as the diagonal matrix are invertible by assumption. Therefore, $p(z)$ is uniquely defined by (\ref{sys1}) and its zeros $\cos(\phi_{j} h)$ are uniquely determined. Having $\cos(\phi_{j} h)$, frequencies $\phi_j$ can be unambiguously extracted if $0\leq \phi_j h \leq \pi$, which gives us the restriction $\phi_j \in [0,K]$.
Finally, the coefficients $\gamma_{j}$ of $f$ in (\ref{1.1}) are determined by the linear system 
$$ f_{k}  = \sum_{j=1}^{M} \gamma_{j} \textstyle \, \cos \left( \frac{\phi_{j} h(2k+1)}{2}\right), \ k=0,\ldots, 2M-1.
$$
\end{proof}

\subsection{ESPRIT for sparse cosine sums}
While the reconstruction of sparse cosine sums can be theoretically performed according to the constructive method described in the proof of Theorem \ref{theo1}, this procedure is numerically not stable. We are particularly interested in the recovery of cosine sums from noisy data and in function  approximation by short cosine sums. Therefore we need a method which is able to estimate also the number $M$ of terms and provides a good function approximation also in the case of noisy data.  
In this section, we will transfer the well-known ESPRIT method \cite{RK89} to our setting. For a related approach, we refer to \cite{PT2014}, where such a method has been derived for the reconstruction of sparse Chebyshev polynomials, and to \cite{KP21}, where the ESPRIT method is considered for the generalized Prony method. Further, in \cite{Cu2020}, an ESPRIT-like method has been employed for other input data, namely $f(j\Delta)$ with $\Delta=\frac{B}{2M-1}$ or $\Delta=\frac{B}{2M}$  for approximation of $f$ by a cosine sum with $M$ terms in $[0,B]$.
We remark that all earlier approaches differ from the ESPRIT method that we will present here.

\medskip

Let us assume that $L$ is a given upper bound of the sparsity $M$ in (\ref{1.1}), and let $N$  be a sufficiently large number of given  function samples $f_{k} = f\left(\frac{h(1+2k)}{2}\right)$, $k=-N, \ldots , N-1$,  such that $M \le L \le N/2$. 
We consider now the following three Toeplitz$+$Hankel matrices  of size $(N-L,L)$, 
\begin{align*}
{\mathbf M}_{N-L,L}^{(-1)} &:= \textstyle \frac{1}{2}  \left(  f_{m+\ell-1} + f_{m-\ell-1} \right)_{m=0,\ell=0}^{N-L-1,L-1}, \\
{\mathbf M}_{N-L,L}^{(0)} &:= \textstyle \frac{1}{2}  \left(  f_{m+\ell} + f_{m-\ell} \right)_{m=0,\ell=0}^{N-L-1,L-1}, \\
{\mathbf M}_{N-L,L}^{(1)} &:= \textstyle \frac{1}{2} \left(  f_{m+\ell+1} + f_{m-\ell+1} \right)_{m=0,\ell=0}^{N-L-1,L-1}. \\
\end{align*}
Then, similarly as in the proof of Theorem \ref{theo1}, we observe  that these matrices  possess the representations
\begin{align*}
{\mathbf M}_{N-L,L}^{(-1)} &= \textstyle \left( \sum\limits_{j=1}^{M} \gamma_{j} \left( \cos\left( \phi_{j} h \left(\frac{-1+2m}{2}\right) \right) \, \cos ( \phi_{j} h \ell ) \right) \right)_{m=0,\ell=0}^{N-L-1,L-1}, \\
{\mathbf M}_{N-L,L}^{(0)} &= \textstyle \left( \sum\limits_{j=1}^{M} \gamma_{j} \left( \cos\left( \phi_{j} h \left(\frac{1+2m}{2}\right) \right) \, \cos ( \phi_{j} h \ell ) \right) \right)_{m=0,\ell=0}^{N-L-1,L-1}, \\
{\mathbf M}_{N-L,L}^{(1)} &= \textstyle \left( \sum\limits_{j=1}^{M} \gamma_{j} \left( \cos\left( \phi_{j} h \left(\frac{3+2m}{2}\right) \right) \, \cos (\phi_{j} h \ell ) \right) \right)_{m=0,\ell=0}^{N-L-1,L-1}. 
\end{align*}
Therefore, we conclude that 
\begin{align*}
 & \textstyle \frac{1}{2} \Big({\mathbf M}_{N-L,L}^{(-1)} + {\mathbf M}_{N-L,L}^{(1)} \Big) \\
 &= \textstyle
\left( \sum\limits_{j=1}^{M} \frac{\gamma_{j}}{2} \left( \cos\left( \phi_{j} h\left(\frac{-1+2m}{2}\right) \right) + \cos\left( \phi_{j} h \left(\frac{3+2m}{2}\right) \right) 
\right)
\, \cos ( \phi_{j} h \ell)  \right)_{m=0,\ell=0}^{N-L-1,L-1} \\
&= \textstyle \left( \sum\limits_{j=1}^{M} \gamma_{j} \left( \cos\left( \phi_{j} h \left(\frac{1+2m}{2}\right) \right)  \cos( \phi_{j} h )
\, \cos (\phi_{j} h \ell) \right) \right)_{m=0,\ell=0}^{N-L-1,L-1} \\
&=  \textstyle {\mathbf V}_{N-L,M}^{(1)} \,  \textrm{diag}  \left(\left( \gamma_{j}  \right)_{j=1}^{M} \right)  \, 
\textrm{diag} \left( \left( \cos(\phi_{j} h)\right)_{j=1}^{M}  \right)\, ({\mathbf V}_{L, M}^{(2)})^{T} 
\end{align*}
with the generalized Vandermonde matrices 
$$ \textstyle {\mathbf V}_{N-L,M}^{(1)} = \Big(\cos \Big( \phi_{j} h \Big( \frac{1+2m}{2} \Big) \Big)\Big)_{m=0,j=1}^{N-L-1,M}, \qquad 
{\mathbf V}_{L, M}^{(2)} = \Big(\cos (\phi_{j} h  \ell) \Big)_{\ell=0,j=1}^{L-1,M},
$$
with full column rank $M$, while ${\mathbf M}_{N-L,L}^{(0)}$ possesses the factorization
\begin{equation}\label{fak0} {\mathbf M}_{N-L,L}^{(0)} = {\mathbf V}_{N-L,M}^{(1)} \,  \textrm{diag} \Big( \gamma_{j} \Big)_{j=1}^{M}   \, ({\mathbf V}_{L, M}^{(2)})^{T}. 
\end{equation}
Thus, it follows that the values $\cos( \phi_{j} h)$, $j=1, \ldots , M$, are eigenvalues of the matrix pencil
\begin{equation}\label{pencil1}
 z {\mathbf M}_{N-L,L}^{(0)} - \textstyle \frac{1}{2} \Big({\mathbf M}_{N-L,L}^{(-1)} + {\mathbf M}_{N-L,L}^{(1)} \Big). 
 \end{equation}
The three matrices ${\mathbf M}_{N-L,L}^{(\kappa)}$, $\kappa=-1,0, 1$, can be all obtained as submatrices 
of 
$$ {\mathbf M}_{N-L+2,L} \coloneqq \textstyle \frac{1}{2} \left( f_{m+\ell-1} + f_{m-\ell-1}\right)_{m=0,\ell=0}^{N-L+1,L-1}, $$
where, using the Matlab notation of submatrices, we have
\begin{align*}
{\mathbf M}_{N-L,L}^{(-1)} & = {\mathbf M}_{N-L+2,L}(1\!:\!N-L, \, 1\!:\!L), \\
{\mathbf M}_{N-L,L}^{(0)} & = {\mathbf M}_{N-L+2,L}(2\!:\!N-L+1, \, 1\!:\!L), \\
{\mathbf M}_{N-L,L}^{(1)} & = {\mathbf M}_{N-L+2,L}(3\!:\!N-L+2, \, 1\!:\!L), 
\end{align*}
i.e., ${\mathbf M}_{N-L,L}^{(-1)}$ is obtained from ${\mathbf M}_{N-L+2,L}$ by removing the last two rows,
${\mathbf M}_{N-L,L}^{(0)}$ is found from ${\mathbf M}_{N-L+2,L}$ by removing the first row and the last row,
and finally ${\mathbf M}_{N-L,L}^{(1)}$ is obtained  by removing the first two rows. 

\begin{remark} Using similar ideas as in \cite{PT2014}, applied to our setting, 
the eigenvalue problem (\ref{pencil1}) can be also obtained as follows for given $M= L \le N/2$.  The roots of the polynomial $p(z)$ in (\ref{poly}) are eigenvalues of a Chebyshev companion matrix $C_M(p) \in \rr^{M\times M}$ defined by
\begin{equation}\label{compcheb}
{\mathbf C}_M(p)=
\left( \begin{array}{cccccc}
0 & \frac{1}{2} & 0 & \ldots & 0 & -\frac{p_0}{2p_M} \\
1 & 0 &  \frac{1}{2} &   \ldots &  0 & -\frac{p_1}{2p_M}  \\
0 & \frac{1}{2}  & 0 & \ldots   & 0  & -\frac{p_2}{2p_M}  \\
\vdots & & & \ddots  & & \\
0 &  0 &  \ldots & \frac{1}{2}  & 0  & -\frac{p_{M-2}}{2p_M}+\frac{1}{2} \\
0 & 0 & 0 & \ldots  & \frac{1}{2}  & -\frac{p_{M-1}}{2p_M}\end{array} \right),
\end{equation}
see for example \cite{Boyd13}, i.e. 
\begin{equation}\label{det}
\mathrm{det} (z \mathbf{I}_M- {\mathbf C}_M(p))= \textstyle \frac{1}{2^{M-1}} p(z).
\end{equation}
Then (\ref{diff}) implies that 
\begin{equation}\label{mateq}
{\mathbf M}_{M}^{(0)} {\mathbf C}_M(p)= \textstyle \frac{1}{2} \Big({\mathbf M}_{M}^{(-1)} + {\mathbf M}_{M}^{(1)} \Big). 
\end{equation}
Since ${\mathbf M}_{M}^{(0)}$ has full rank $M$, it follows that  the matrix pencil
$${\mathbf M}_{M}^{(0)} ( z {\mathbf I}_{M} - {\mathbf C}_{M}(p)) 
= z {\mathbf M}_{M}^{(0)} - \textstyle \frac{1}{2} \Big({\mathbf M}_{M,M}^{(-1)} + {\mathbf M}_{M,M}^{(1)} \Big)
$$
possesses the roots of polynomial $p(z)$, i.e., $\cos(\phi_j h )$ for $j=1,\ldots, M$, as eigenvalues. 
If $M$ is unknown and we have $L$ as an upper bound for $M$ such that   $M\leq L \leq N/2$, we can modify the procedure to get the rectangular matrix pencil (\ref{pencil1}).
\end{remark}

To derive a more stable representation of the matrix pencil problem in (\ref{pencil1})
we proceed similarly as in \cite{PT2013} for exponentials sums. We employ the SVD of ${\mathbf M}_{N-L+2,L}$,
\begin{equation}\label{SVD} {\mathbf M}_{N-L+2,L} = {\mathbf U}_{N-L+2} \, {\mathbf D}_{N-L+2,L} \, {\mathbf W}_{L}, 
\end{equation}
where ${\mathbf U}_{N-L+2}$ and ${\mathbf W}_{L}$ are orthogonal square matrices of size $N-L+2$ and $L$ respectively, and 
$$ {\mathbf D}_{N-L+2,L} = \left( \begin{array}{c} \mathrm{diag} \, (\sigma_{\ell})_{\ell=1}^{L} \\ {\mathbf 0}_{N-2L+2,L} \end{array} \right) $$
contains the singular values $\sigma_{\ell}$ of ${\mathbf M}_{N-L+2,L}$, where ${\mathbf 0}_{N-2L+2,L}$ is a zero matrix of the given size.
Since ${\mathbf M}_{N-L+2,L}$ possesses a factorization of the form 
$$ {\mathbf M}_{N-L+2,L} = {\mathbf V}_{N-L+2,M}^{(1)} \,  \textrm{diag} \Big( \gamma_{j} \Big)_{j=1}^{M}   \, ({\mathbf V}_{L, M}^{(2)})^{T}, $$
similarly to (\ref{fak0}), it follows that only $M$ singular values $\sigma_{j}$ in ${\mathbf D}_{N-L+2,L}$ are non-zero, and $M$ can be found as the numerical rank of ${\mathbf M}_{N-L+2,L}$. 
The SVD in (\ref{SVD}) also yields similar factorizations for the three submatrices. Using the short notation 
$${\mathbf U}_{N-L,N-L+2}^{(\kappa)} = {\mathbf U}_{N-L+2}(\kappa+2\!:\!N-L+\kappa+1,1\!:\!N-L+2), \qquad  \kappa=-1,0,1, $$
 we find
$$ {\mathbf M}_{N-L,N-L+2}^{(\kappa)} = {\mathbf U}_{
N-L,N-L+2}^{(\kappa)} \, {\mathbf D}_{N-L+2,L} \, {\mathbf W}_{L}, \qquad  \kappa=-1,0,1. $$
By multiplication with ${\mathbf W}_{L}^{-1} = {\mathbf W}_{L}^{T}$ from the right, the matrix pencil in (\ref{pencil1}) can therefore be rewritten as 
\begin{align*}  &  \Big(z {\mathbf U}_{N-L,N-L+2}^{(0)} 
-  \textstyle \frac{1}{2} \Big( {\mathbf U}_{N-L,N-L+2}^{(-1)} + {\mathbf U}_{N-L,N-L+2}^{(1)}  \Big) \Big)
{\mathbf D}_{N-L+2,L}.
\end{align*}
Finally, a multiplication with the pseudo inverse of ${\mathbf D}_{N-L+2,L}$ yields
$$ z  {\mathbf U}_{N-L,M}^{(0)} -  \textstyle \frac{1}{2} \Big( {\mathbf U}_{N-L,M}^{(-1)} + {\mathbf U}_{N-L,M}^{(1)} \Big), $$
where we removed the zero columns. This matrix pencil problem is equivalent to the problem to find the eigenvalues $2z$ of 
$$({\mathbf U}_{N-L,M}^{(0)})^{\dagger} \Big( {\mathbf U}_{N-L,M}^{(-1)} + {\mathbf U}_{N-L,M}^{(1)}  \Big), $$
where $({\mathbf U}_{N-L,M}^{(0)})^{\dagger}$ denotes the Moore Penrose inverse of ${\mathbf U}_{N-L,M}^{(0)}$. The coefficient vector  $\bgamma=(\gamma_j)_{j=1}^{M}$ of $f$ in (\ref{1.1}) is found as solution of the linear system 
$$
{\mathbf V}_{N,M}  \, \bgamma = \textstyle \left(f\left(\frac{h(2k+1)}{2}\right)\right)_{k=0}^{N-1},
$$
with the generalized Vandermonde matrix
\begin{equation}\label{vanmat}
{\mathbf V}_{N,M}= \textstyle \left( \cos \left( \phi_{j} h \Big(\frac{2k+1}{2}\Big)\right) \right)_{k=0,j=1}^{N-1,M}.
\end{equation}
The corresponding ESPRIT algorithm is summarized in Algorithm \ref{algESPRIT}.

\begin{algorithm}[ht]\caption{ESPRIT algorithm for recovery of cosine sums}
\label{algESPRIT}
\small{
\textbf{Input:} ${\mathbf f} = (f_{k})_{k=0}^{N-1} = \big(f\left(\frac{h(2k+1)}{2}\right)\big)_{k=0}^{N-1}$ (equidistant sampling values of $f$ in (\ref{1.1}))\\
for exact sampling data (reconstruction): $L$ (upper bound for $M$  with $ L \leq  N/2$), accuracy $\epsilon >0$,\\
for noisy sampling data (approximation): $M$ (wanted length of the cosine sum), $L=N/2 > M$.

\begin{description}
\item{1.} Build the Toeplitz$+$Hankel matrix ${\mathbf M}_{N-L+2, L} \coloneqq \left(\frac{1}{2}(f_{\ell+m-1} + f_{m-\ell-1}) \right)_{\ell,m=0}^{N-L+1,L-1}$ and 
compute the SVD  of ${\mathbf M}_{N-L+2, L}$ as in (\ref{SVD}). In case of exact input data and unknown $M$, 
determine the numerical rank $M$ of ${\mathbf M}_{N-L+2, L}$ by taking the smallest $M$ such that 
$ \sigma_{M+1} < \epsilon \, \sigma_{1}$, where $\sigma_{j}$ are the ordered diagonal entries of ${\mathbf D}_{N-L+2,L}$ with $\sigma_{1} \ge \sigma_{2} \ge \ldots \geq \sigma_{L}$.
\item{2.} Form
 ${\mathbf U}_{N-L,M}^{(-1)} \coloneqq {\mathbf U}_{N-L+2}(1\!:\!N-L,1\!:\!M)$, 
  ${\mathbf U}_{N-L,M}^{(0)} \coloneqq {\mathbf U}_{N-L+2}(2\!:\!N-L+1,1\!:\!M)$,
    ${\mathbf U}_{N-L,M}^{(1)} \coloneqq {\mathbf U}_{N-L+2}(3\!:\!N-L+2,1\!:\!M)$,
and determine the vector of eigenvalues ${\mathbf z} =(z_{1}, \ldots , z_{M})^{T}$ of 
$\left(  {\mathbf U}_{N-L,M}^{(0)} \right)^{\dagger} \left( {\mathbf U}_{N-L,M}^{(-1)} + {\mathbf U}_{N-L,M}^{(1)} \right), $
where $\left({\mathbf U}_{N-L,M}^{(0)} \right)^{\dagger}$ denotes the Moore-Penrose inverse of  ${\mathbf U}_{N-L,M}^{(0)}$.
Extract $\left( \phi_{j} \right)_{j=1}^{M}$ from  $\frac{1}{2} {\mathbf z} = \left(\cos (  \phi_{j} h) \right)_{j=1}^{M}$.
\item{3.} Compute ${\bgamma} = (\gamma_{j})_{j=1}^{M}$  as the least squares solution of the linear system 
$$
{\mathbf V}_{N,M}  \, \bgamma = {\mathbf f},
$$
with the generalized Vandermonde matrix (\ref{vanmat}).
\end{description}

\noindent
\textbf{Output:} $M \in {\mathbb N}$,  $\phi_{j}, \, \gamma_{j} \in {\mathbb R}$, $j=1, \ldots , M$.}
\end{algorithm}

\begin{remark}\label{compl} \hfil \\
1. The arithmetical complexity of the SVD decomposition of the Toeplitz$+$Hankel matrix  ${\mathbf M}_{N-L+2,L+1}$ in step 1 of Algorithm \ref{algESPRIT} requires ${\mathcal O}\big((N-L)L^{2}\big)$ operations.
Step 2 involves besides the matrix inversion and matrix multiplication the solution of the eigenvalue problem for an $(N-L) \times (N-L)$ matrix with ${\mathcal O}((N-L)^{3})$ operations.
Thus, we have overall computational costs of  ${\mathcal O}((N-L)^{3})$, and for $L \approx N/2$ we require ${\mathcal O}(N^{3})$ operations.
The computational costs can be reduced by employing a partial SVD.\\
2. Instead of applying an SVD of the matrix ${\mathbf M}_{N-L+2,L+1}$ in the first step of the algorithm \ref{algESPRIT}, we can use also the QR decomposition of this matrix to improve numerical stability. This approach has been also employed for exponential sums, see \cite{Hua90,PT2013,DPP21} and is called matrix pencil method (MPM).
\end{remark}

\section{ESPIRA for reconstruction of cosine sums}

\subsection{ESPIRA-I based on rational approximation}\label{sec31}
In this section, we will derive a new algorithm for the recovery of sparse cosine sums $f$ as in (\ref{1.1}).
As before,  we assume that we are given the (possibly corrupted) samples
$$
f_{\ell}= \textstyle f\left(\frac{h (2\ell+1)}{2}\right) + \epsilon_{\ell}, \qquad  \ell=0,\ldots,N-1,
$$
with  $N> 2M$, $h=\frac{\pi}{K}$, $K>0$. Our goal is to recover  all parameters $M  \in {\mathbb N}$, $\gamma_{j} \in {\mathbb R}\setminus \{0\}$, $\phi_{j} \in [0, K)$, $j=1, \ldots , M$, of $f$ in (\ref{1.1}).

We recall that the discrete cosine transform of type II (DCT-II) is given as a matrix vector product $\hat{\mathbf f} = {\mathbf C}_{N}^{II} \, {\mathbf  f}$ with the  cosine matrix 
\begin{equation}\label{CC} 
{\mathbf C}_{N}^{II} \coloneqq  \left( \cos \left( \frac{k(2\ell+1) \pi}{2N}  \right)\right)_{k,\ell=0}^{N-1}.
\end{equation}
Then  
$$ \textstyle \sqrt{\frac{2}{N}} \, \mathrm{diag} \,  (\frac{1}{\sqrt{2}}, 1, \ldots , 1) \, {\mathbf C}_{N}^{II} $$
is an orthogonal matrix.
The DCT-II transform  of length $N$ can be performed with a fast and numerically stable algorithm with complexity ${\mathcal O}(N \log N)$  see e.g. \cite{PT05} or \cite{PPST18}, Section 6.3. 
 Using the structure of $f$ in (\ref{1.1}) (assuming that we have exact samples), we obtain for  the components $\hat{f}_{k}$, $k=0,\ldots, N-1$,
\begin{align}\nonumber
 \hat{f_{k}} &= \textstyle  \sum\limits_{\ell=0}^{N-1} f_{\ell} \, \cos\left(\frac{\pi k(2\ell+1)}{2N} \right) \\
 \nonumber
&=  \textstyle \sum\limits_{\ell=0}^{N-1} \left( \sum\limits_{j=1}^{M} \gamma_{j} \, \cos \left( \frac{\phi_{j} h (2\ell+1)}{2}\right) \right) \, \cos\left(\frac{\pi k(2\ell+1)}{2N} \right) \\
\label{fk11}
&=  \textstyle \sum\limits_{j=1}^{M} \gamma_{j} \sum\limits_{\ell=0}^{N-1} \frac{1}{2} \left( \cos \left((2\ell +1) \Big(\frac{\phi_{j} h}{2}+ \frac{k \pi}{2N}\Big)\right) + \cos \left( (2\ell +1) \Big(\frac{\phi_{j} h}{2}- \frac{k \pi}{2N}\Big) \right) \right).
\end{align}
Recall that 
\begin{equation}\label{sum1}
\textstyle \sum\limits_{\ell=0}^{N-1} \cos \left( {(2\ell+1)x}\right) = \frac{\sin (2N \,  x)}{2 \, \sin x}, \qquad x \not\in \pi {\mathbb Z}. 
\end{equation} 
We assume first that  $\phi_{j} h \not\in \frac{\pi}{N} {\mathbb Z}$ for $j=1, \ldots , M$, such that for all $k \in {\mathbb Z}$ we can apply this formula with $x= \Big(\frac{\phi_{j} h}{2} \pm \frac{k \pi}{2N}\Big)  \not\in \pi {\mathbb Z}$,
\begin{align}\label{3new}
 \hat{f_{k}} &=  \sum\limits_{j=1}^{M} \frac{\gamma_{j}}{4} \left( \frac{\sin  (\phi_{j} hN +k\pi)}{\sin\big(\frac{\phi_{j} h}{2} + \frac{k \pi}{2N}\big)} + \frac{\sin (\phi_{j} hN -k\pi)}{\sin\big(\frac{\phi_{j} h}{2} - \frac{k \pi}{2N}\big) } \right).
\end{align}
Now, observe that $\sin(\phi_{j} hN +k\pi) = \sin(\phi_{j} hN -k\pi) = (-1)^{k} \, \sin(\phi_{j} hN)$. Therefore,
\begin{align}
\nonumber
 \hat{f_{k}} &=   (-1)^{k} \sum\limits_{j=1}^{M} \frac{\gamma_{j}}{4} \, \sin(\phi_{j} hN) \,\left( \frac{\sin\big(\frac{\phi_{j} h}{2} - \frac{k \pi}{2N}\big) + \sin\big(\frac{\phi_{j} h}{2} + \frac{k \pi}{2N}\big) }{\sin\big(\frac{\phi_{j} h}{2} + \frac{k \pi}{2N}\big) \, \sin\big(\frac{\phi_{j} h}{2} - \frac{k \pi}{2N}\big)} \right)\\
 \nonumber
&=   (-1)^{k} \sum\limits_{j=1}^{M} \gamma_{j} \, \sin(\phi_{j}h N) \,  \left( \frac{\sin \big( \frac{\phi_{j} h}{2} \big) \, \cos \big(\frac{\pi k}{2N}\big)}{\cos \big(\frac{k\pi}{N}\big) - \cos \big(\phi_{j} h \big)}
 \right) \\
 \label{fk}
&=   (-1)^{k} \, \cos\left(\frac{\pi k}{2N}\right) \sum\limits_{j=1}^{M}   \left( \frac{\gamma_{j} \, \sin(\phi_{j}h N) \,\sin \big( \frac{\phi_{j} h}{2} \big) }{\cos \big(\frac{k\pi}{N}\big) - \cos \big(\phi_{j} h\big)}
 \right) .
\end{align}
The representation for $\widehat{f}_k$ in (\ref{fk}) is well-defined  if $\phi_j h N \not\in \pi \zz$ for $j=1,\ldots,M$. For $\phi_j \in \frac{\pi}{hN}\zz$ and $k=1, \ldots , N-1$, the rule of L’Hospital leads to
\begin{equation}\label{lop} 
\lim\limits_{ \phi_{j} \to \frac{\pi k}{hN} }\frac{  \sin (\phi_j hN) \sin\big( \frac{\phi_j h}{2} \big)  }{ \cos\big( \frac{\pi k}{N} \big)- \cos\big( \phi_j h  \big)  } = \frac{N \cos (\pi k) \sin\big( \frac{\pi k}{2N} \big)}{\sin\big( \frac{\pi k}{N} \big)}=\frac{(-1)^{k} N }{2 \cos\big( \frac{\pi k}{2N} \big)}.
\end{equation}

For $\phi_j \not\in \frac{\pi}{hN}\zz$ for $j=1, \ldots , M$, formula (\ref{fk}) implies
\begin{equation}\label{dctfor} \textstyle
(-1)^{k} 
 \left( \cos\big( \frac{\pi k}{2N}  \big) \right)^{-1} \widehat{f}_k =   \sum\limits_{j=1}^{M}   \left( \frac{\gamma_{j} \, \sin(\phi_{j}h N) \,\sin \big( \frac{\phi_{j} h}{2} \big) }{\cos \big(\frac{k\pi}{N}\big) - \cos \big(\phi_{j} h\big)}
 \right), \qquad k=0, \ldots , N-1.
\end{equation}
Therefore the problem of reconstruction of all parameters of the representation (\ref{1.1}) can be reformulated as  rational interpolation  problem.
We define now a rational function of type $(M-1,M)$,
\begin{equation}\label{rat}
r_M(z) \coloneqq  \textstyle \sum\limits_{j=1}^{M} \frac{a_j}{z-b_j} 
\end{equation}
with
\begin{align*}
a_j & :=  \textstyle  \gamma_j \sin \left( \frac{\phi_j h}{2} \right) \sin (\phi_j h N), \\
b_j &:= \textstyle \cos\big(  \phi_j h \big) .
\end{align*}
Then by (\ref{dctfor}), this rational function satisfies the $N$ interpolation conditions
\begin{equation}\label{intp} \textstyle
r_M \left( \cos\big( \frac{\pi k}{N} \big)  \right) =(-1)^{k}
 \left( \cos\big( \frac{\pi k}{2N}  \big) \right)^{-1} \widehat{f}_k, \qquad k=0, \ldots, N-1 .
\end{equation}

The recovery of all parameters of $f$ in (\ref{1.1}) can now be achieved as follows. First we compute a rational function $r_{M'}$ of minimal type $(M'-1,M')$ that satisfies all interpolation conditions (\ref{intp}). Then we have to rewrite this rational function as a partial  fraction decomposition as in (\ref{rat}), i.e., we determine the parameters $a_{j}$, $b_{j}$, $j=1, \ldots , M'$. The type of the rational function $r_{M'}$  provides us the number $M=M'$.
The values $b_{j} =\cos(\phi_{j} h)$, $j=1, \ldots , M$, are the poles of $r_{M}$, and we can extract $\phi_{j}$ by taking
$$ \phi_j= \textstyle \frac{\arccos b_j}{h}.$$
Once the $\phi_{j}$'s are known, the parameters $\gamma_{j}$ can be simply recovered from $a_{j}$.

We will use the AAA algorithm proposed in \cite{NST18} to evaluate the rational function $r_{M}$. The main steps of ESPIRA-I for cosine sums are described in Algorithm \ref{alg1}.

\begin{algorithm}[ht]\caption{ESPIRA-I for cosine sums}
\label{alg1}
\small{
\textbf{Input:} ${\mathbf f} = (f_{\ell})_{\ell=0}^{N-1} = \left(f\left(\frac{h(2\ell+1)}{2}\right)\right)_{\ell=0}^{N-1}$ (equidistant sampling values of $f$ in (\ref{1.1}))\\
for exact sampling data (reconstruction): $jmax =\lfloor N/2 \rfloor-1$ (upper bound for $M$),\\
\phantom{for exact sampling data (reconstruction):} ${tol}>0$ tolerance for the approximation error\\
for noisy sampling data (approximation): $jmax= M+1$ ($M$ wanted length of cosine sum).

\begin{enumerate}
\item Compute the DCT-II-vector $\hat{\mathbf f}= (\hat{f}_{k})_{k=0}^{N-1}$ with $\hat{f}_{k} = \sum\limits_{\ell=0}^{N-1} f_{\ell} \, \cos \left( \frac{\pi (2\ell+1)k}{2N}  \right)$ of ${\mathbf f}$ and
put $g_{k}:= (-1)^{k} 
 \left( \cos\left( \frac{\pi k}{2N}  \right) \right)^{-1} \widehat{f}_k$, $k=0, \ldots , N-1$.
\item Use the AAA Algorithm \ref{alg2} with at most $jmax$ iteration steps to compute a rational function $r_{M}(z)$, where for exact input data $M$ is taken as of smallest positive integer such that 
 $$\left |r_M \left( \cos\left( \frac{\pi k}{N} \right)  \right) - g_{k} \right| < tol, \qquad k=0, \ldots , N-1.$$ 
\item Use Algorithm \ref{alg3} to compute a fractional decomposition representation of $r_{M}(z)$, 
$$ r_{M}(z) = \sum_{j=1}^{M} \frac{a_{j}}{z - b_{j}}, $$
i.e., compute $a_{j}$, $b_{j}$, $j=1, \ldots , M$.
\item Set $\phi_j=\frac{\arccos b_j}{h}$ and $\gamma_j=\frac{a_j}{\sin\left( \frac{ \phi_{j} h}{2}  \right) \, \sin \left(\phi_{j} hN  \right)}$ for $j=1,\ldots,M$.
\end{enumerate}

\noindent
\textbf{Output:} $M$, $\gamma_{j}$, $\phi_{j}$, for $j=1, \ldots , M$ (all parameters of $f$ in (\ref{1.1}).}
\end{algorithm}

The computation of the DCT-II in Step 1 of Algorithm \ref{alg1} requires ${\mathcal O}(N\, \log N)$ operations, see \cite{PT05}. The complexity of the AAA Algorithm \ref{alg2} is  ${\mathcal O}(N\, M^{3})$, since Algorithm \ref{alg2} involves SVDs of Loewner matrices of dimension $(N-j) \times j$ for $j=1, \ldots, M+1$, \cite{NST18}. Finally, Algorithm \ref{alg3} involves an eigenvalue problem  with complexity ${\mathcal O}(M^{3})$. Therefore, the overall computational costs of Algorithm \ref{alg1} are ${\mathcal O}(N\, (M^{3}+\log N))$, which is very reasonable for small $M$. 

As we will outline in the last section on numerical experiments, for recovery of cosine sums from a large number of noisy input data, we will slightly change the algorithm  by taking only the function values $g_{k}$ for $k=0, \ldots \frac{N}{2}-1$ in step 2 of the algorithm. In this  way we improve the stability of the algorithm by avoiding amplification of the error by the factor $\cos(\frac{\pi k}{2N})^{-1}$ in  the definition of $g_{k}$.

\subsection{The AAA Algorithm for rational approximation}
\label{aaa-algorithm}

We will employ the recently proposed AAA algorithm \cite{NST18} for rational approximation in order to perform step 2 in Algorithm \ref{alg1}. Therefore, we shortly summarize this algorithm in our setting.
For more detailed information we refer to \cite{NST18,DHT14} or to \cite{DPP21}, where we have applied this algorithm for the recovery of complex exponential sums.
The AAA algorithm is numerically stable due to an iterative procedure using adaptively chosen interpolation sets and a barycentric  representation  of the rational interpolant.

\medskip

Let $I \coloneqq \{0, \ldots , N-1\}$ be the index set, $Z =\{ z_{k} \coloneqq \cos\left( \frac{\pi k}{N} \right): \,  k \in I \}$  the set of support points, and $ G:=\left\{ g_{k} \coloneqq (-1)^{k}  \left( \cos\left( \frac{\pi k}{2N}  \right) \right)^{-1} \widehat{f}_k,  \,  {k \in I} \right\}$ the corresponding set of known function values.
Then the AAA algorithm will find a rational function $\tilde{r}_{M}(z)$ in barycentric representation of type $(M, M)$ such that 
\begin{equation}\label{intp1}
\tilde{r}_M \left( z_{k}   \right) = g_{k}, \qquad k \in I,
\end{equation}
where $N/2$ is an upper bound of the unknown degree $M$.
In our application, we indeed need a rational function of type $(M-1, M)$ instead of $(M,M)$. This will be forced using the  model (\ref{rat})  to transfer $\tilde{r}_{M}$ into a partial fraction decomposition  in Algorithm \ref{alg3}. In case of exact data the resulting function $\tilde{r}_M$ will have a type $(M-1,M)$ (see Corollaries \ref{AAAconv} and \ref{corconv}).

We briefly describe the iteration steps of the AAA algorithm.

We initialize the sets $S_{0} := \emptyset$ and $\Gamma_{0} := I$.
In  step $J=1$ we start with $\tilde{r}_{0}(z) := g_{k_{1}}$, where $k_{1} := \argmax_{k\in I} |g_{k}|$, and set $S_{1}:=\{ k_{1}\}$, $\Gamma_{1}:=I \setminus \{k_{1}\}$.
At the iteration step $J > 1$, we proceed as follows to compute a rational function $\tilde{r}_{J-1}$ of type $(J-1,J-1)$, see also \cite{NST18,DP21,PPD21}.
Let $S_{J} \cup \Gamma_{J} = I$ be the partition of index sets found in the $(J-1)$-th iteration step, 
 with $|S_{J}| = J$ and $|\Gamma_{J}| = N-J$.
For the rational function $\tilde{r}_{J-1}$ of type $(J-1,J-1)$ we employ the barycentric ansatz
\begin{equation}\label{bar-form}
\tilde{r}_{J-1}(z) = 
\frac{\tilde{p}_{J-1}(z)}{\tilde{q}_{J-1}(z)} \coloneqq \frac{\sum\limits_{k \in S_{J}} \frac{w_k \,g_k}{z- z_{k} }}{\sum\limits_{k \in S_{J}} \frac{w_k}{z-  z_{k}  }}, 
\end{equation}
where
$w_k \in {\mathbb R}$, $k \in S_{J}$, are weights. Then we already have by construction $\tilde{r}_{J-1}(z_{k}) = g_{k}$ for all $k \in S_{J}$ if $w_{k} \neq 0$.
The weight vector ${\mathbf w}_{J} = {\mathbf w} \coloneqq \left( w_k \right)_{k \in S_{J}}$ is now chosen such that $\tilde{r}_{J-1}(z)$ approximates the remaining data $g_{\ell}$, $\ell \in \Gamma_{J}$. Further, we assume that  $\| {\mathbf w} \|_2^2 = \sum_{k \in S_{J}} w_k^2 = 1$.
To compute ${\mathbf w}$, we consider the restricted least-squares problem obtained by linearizing the interpolation conditions for $\ell \in \Gamma_{J}$,  
\begin{equation}\label{mini}
\min_{\mathbf w} \sum\limits_{\ell \in \Gamma_{J}} \left|g_{\ell} \, \tilde{q}_{J-1}\left(z_{\ell}  \right)-\tilde{p}_{J-1}\left( z_{\ell} \right) \right|^2, \quad \textrm{such~that} \quad  \| {\mathbf w} \|_2^2 = 1.
\end{equation}
We define the Loewner matrix
$$ 
{\mathbf L}_{N-J,J} \coloneqq \left( \frac{g_{\ell} - g_{k}}{z_{\ell} - z_{k}  } \right)_{\ell \in \Gamma_{J}, k \in S_{J}},
$$
and rewrite the term in (\ref{mini}) as
$$
\sum_{\ell \in \Gamma_{J}} \left|g_{\ell} \, \tilde{q}_{J-1}\left( z_{\ell}  \right)-\tilde{p}_{J-1}\left( z_{\ell} \right)\right|^2
= \sum_{\ell \in \Gamma_{J}} \left|       {{\mathbf w}}^T \, \left( 
\frac{g_{\ell} - g_{k}}{z_{\ell}- z_{k}   }\right)_{k \in S_{J}} \right|^2 
=\| {\mathbf L}_{N-J,J} {{\mathbf w}} \|_2^2. 
$$
Thus,  the minimization problem in (\ref{mini}) takes the form
$
\min_{\|{\mathbf w}\|_{2} = 1 }\| {\mathbf L}_{N-J,J} {{\mathbf w}} \|_2^2 
$,
and the solution vector ${\mathbf w} \in {\mathbb C}^{J}$ is the right singular vector  corresponding to the smallest singular value of ${\mathbf L}_{N-J,J}$.
Having determined the weight vector ${\mathbf w}$, the rational function $\tilde{r}_{J-1}$ is completely fixed by (\ref{bar-form}).
Finally, we  consider the errors $|\tilde{r}_{J-1}\left(z_{\ell}\right) - g_{\ell}| $ for all $\ell \in \Gamma_{J}$, i.e. for all points $z_\ell$, $\ell \in \Gamma_{J}$, which we do not use for interpolation.
The algorithm terminates if $\max_{\ell \in \Gamma_{J}} | \tilde{r}_{J-1}\left(z_{\ell}\right) - g_{\ell}| < \epsilon$ for a predetermined bound $\epsilon$ or if $J$ reaches a predetermined maximal degree. Otherwise, we find the updated index set 
$$ S_{J+1} \coloneqq S_{J} \cup  \argmax_{\ell \in \Gamma_{J}}| \tilde{r}_{J-1}\left(z_{\ell}\right) - g_{\ell}| $$
and update $\Gamma_{J+1} = I \setminus S_{J+1}$.
\medskip

\begin{algorithm}[ht]\caption{Iterative rational approximation by  AAA algorithm \cite{NST18}} 
\label{alg2} 
\small{
\textbf{Input:} 
$\hat{\mathbf f} \in {\mathbb R}^{N}$ (DCT-II of ${\mathbf f}$)  \\
\phantom{\textbf{Input:}} ${tol}>0$ (tolerance for the approximation error)\\ 
\phantom{\textbf{Input:}} $jmax \in \nn$ with $\textit{jmax} < N/2 $ (maximal order of polynomials in the rational function)

\noindent
\textbf{Initialization:}\\
Set the ordered index set  ${\mathbf \Gamma} \coloneqq \left( k \right)_{k=0}^{N-1}$. Set $z_{k}:=\cos\left( \frac{\pi k}{N}  \right) $ for $k \in {\mathbf \Gamma}$.
Compute ${\mathbf g}_{{\mathbf \Gamma}} = (g_{k})_{k=0}^{N-1}$ by
$$
g_k=(-1)^{k}  \left( \cos\left( \frac{\pi k}{2N}  \right) \right)^{-1} \widehat{f}_k .
$$

\noindent
\textbf{Main Loop:}

\noindent
for $j=1:$ \textit{jmax}
\begin{enumerate}
\item If $j=1$, choose ${\mathbf S}\coloneqq (k)$, $\mathbf{g}_{\mathbf{S}} \coloneqq (g_{k})$, 
where $k \coloneqq \argmax_{\ell \in \mathbf{\Gamma}} |g_{\ell}|$; update $\mathbf{\Gamma}$ and $\mathbf{g}_{{\mathbf \Gamma}}$ by deleting $k$ in $\mathbf{\Gamma}$ and $g_{k}$ in $\mathbf{g}_{{\mathbf \Gamma}}$.\\
 If $ j>1$, compute $k \coloneqq \argmax_{\ell \in \mathbf{\Gamma}} | r_{\ell} - g_{\ell}|$; update $\mathbf{S}$, $\mathbf{g}_{\mathbf{S}}$, $\mathbf{\Gamma}$ and $\mathbf{g}_{{\mathbf \Gamma}}$ by adding $k$ to $\mathbf{S}$ and deleting  $k$ in $\mathbf{\Gamma}$, adding $g_{k}$ to $\mathbf{g}_{\mathbf{S}}$ and deleting it in $\mathbf{g}_{{\mathbf \Gamma}}$.
\item Build  $\mathbf{C}_{N-j,j}\!\coloneqq\!\left( \frac{1}{z_{\ell} - z_{k} } \right)_{\ell \in \mathbf{\Gamma}, k \in \mathbf{S}}$, ${\mathbf L}_{N-j,j}\!\coloneqq\! \left( \frac{g_{\ell} - g_{k}}{z_{\ell} - z_{k}} \right)_{\ell \in \mathbf{\Gamma}, k \in \mathbf{S}}$.
\item Compute the normalized singular vector ${\mathbf w}_{{\mathbf S}}$
 corresponding to the smallest singular value of $\mathbf{L}_{N-j,j}$.
\item Compute $\mathbf{p}\coloneqq\mathbf{C}_{N-j,j} ({\mathbf w}_{{\mathbf S}}.\ast \mathbf{g}_{\mathbf{S}})$, $\mathbf{q} \coloneqq \mathbf{C}_{N-j,j} {\mathbf w}_{{\mathbf S}}$ and $\mathbf{r}=(r_\ell)_{\ell \in \mathbf{\Gamma}}\coloneqq \mathbf{p}./\mathbf{q} \in \rr^{N-j}$, where $.*$ denotes componentwise multiplication and $./$ componentwise division.
\item If $\| \mathbf{r}-\mathbf{g}_{{\mathbf \Gamma}} \|_\infty<$ \textit{tol}  then set $M\coloneqq j-1$ and stop.
\end{enumerate}
end (for)\\
\textbf{Output: } \\ 
$M=j-1$, where $(M,M)$ is the type of the rational function $\tilde{r}_{M}$ \\
$\mathbf{S} \in {\mathbb Z}^{M+1}$ determining the index set $S_{M+1}$ where $\tilde{r}_{M}$ satisfies the interpolation conditions  $\tilde{r}_M(z_k)=g_k$, $k\in S$\\
  $\mathbf{g}_{\mathbf{S}}= (g_{k})_{k \in S_{M+1}} \in \rr^{M+1}$ is the vector of the corresponding interpolation values\\
   ${\mathbf w}_{{\mathbf S}}=(w_k)_{k \in S_{M+1}} \in \rr^{M+1}$ is the weight vector. }

\end{algorithm}

Algorithm \ref{alg2} provides the rational function $\tilde{r}_M(z)$ in a barycentric form 
$\tilde{r}_{M}(z)  = \frac{\tilde{p}_{M}(z)}{\tilde{q}_{M}(z)}$ with 
\begin{equation}\label{bar-form-1}
\tilde{p}_{M}(z) \coloneqq \sum_{k \in S_{M+1}} \frac{w_k \, g_{k}}{z-\cos\left( \frac{\pi k}{N} \right)  }, \qquad  \tilde{q}_{M}(z) \coloneqq \sum_{k \in S_{M+1}} \frac{w_k}{z-\cos\left( \frac{\pi k}{N} \right)  },
\end{equation}
which are determined by the output parameters of this algorithm. Note that it is important to take the occurring index sets and data sets in Algorithm  \ref{alg2}  as ordered sets, therefore they are given as vectors ${\mathbf S}$, ${\mathbf \Gamma}$, ${\mathbf g}_{\mathbf S}$ and ${\mathbf g}_{\mathbf \Gamma}$, as in the original algorithm, \cite{NST18}.

\subsection{Partial fraction decomposition}

In order to rewrite $\tilde{r}_M(z)$ in (\ref{bar-form-1}) in the form  of a partial fraction decomposition,
\begin{equation}\label{rn1} 
r_{M}(z) = \sum_{j=1}^{M} \frac{a_{j}}{z-b_{j}},
\end{equation}
we need to determine $a_1, \ldots , a_M$ and $b_1, \ldots , b_M$ from the output of Algorithm \ref{alg2}. At the same time we force the rational function $\tilde{r}_M(z)$ in (\ref{bar-form-1}) to be of type $(M-1,M)$ of $r_{M}$ in (\ref{rn1}). Note again that in case of exact data $\tilde{r}_M(z)$ in (\ref{bar-form-1}) has indeed type $(M-1,M)$ and coincides with $r_M$ in (\ref{rn1}) (see Corollaries \ref{AAAconv} and \ref{corconv}). 

The zeros of the denominator $\tilde{q}_{M}(z)$ are the poles $b_j$ of $r_M(z)$ and can be computed by solving an $(M+2)\times (M+2)$ generalized eigenvalue problem (see \cite{NST18} or \cite{PPD21}), that has for $S_{M+1}=\{k_{1}, \ldots,  k_{M+1} \}$ the form 
\begin{equation}\label{eig} \left( \begin{array}{ccccc}
0 & w_{k_{1}} & w_{k_{2}} & \ldots & w_{k_{M+1}} \\
1 & \cos\left( \frac{\pi k_1}{N} \right)&   &     & \\
1 & & \cos\left( \frac{\pi k_2}{N} \right)& & \\
\vdots & & & \ddots & \\
1 & & & & \cos\left( \frac{\pi k_{M+1}}{N} \right)\end{array} \right) \, {\mathbf v}_{z}= z \left( \begin{array}{ccccc}
0 & & & & \\
& 1 & & & \\
& & 1 & & \\
& & & \ddots & \\
& & & & 1 \end{array} \right) \, {\mathbf v}_{z}.
\end{equation}
Two eigenvalues of this generalized eigenvalue problem are infinite and the other $M$ eigenvalues are the wanted zeros $b_j$ of $\tilde{q}_{M}(z)$ (see \cite{
Klein,NST18,PPD21} for more detailed explanation). 
We apply Algorithm \ref{alg3} to the output of Algorithm \ref{alg2}.

\begin{algorithm}[ht]\caption{Reconstruction of parameters $a_j$ and $b_j$ of partial fraction representation}
 \label{alg3}
\small{
\textbf{Input:} $\hat{\mathbf f} \in {\mathbb R}^{N}$ DCT-II of ${\mathbf f}$ \\
\phantom{\textbf{Input:}} $\mathbf{S}\in \zz^{M+1}$,
  ${\mathbf{g}}_{\mathbf{S}} \in \rr^{M+1}$,
   ${\mathbf w}_{{\mathbf S}} \in \rr^{M+1}$ the output vectors of Algorithm \ref{alg2}

\begin{enumerate}
\item Build the matrices in (\ref{eig}) and solve this eigenvalue problem to find the  vector ${\mathbf b}^{T}=(b_1, \ldots, b_M)^{T}$ of the $M$ finite eigenvalues;

\item Build the Cauchy matrix $\mathbf{C}_{N,M}=\left(\frac{1}{\cos\big( \frac{\pi k}{N} \big)-b_j} \right)_{k =0,  j=1}^{N-1,M}\in \rr^{N\times M} $ and compute the least squares solution of  the linear system
$$
\mathbf{C}_{N,M} \, \mathbf{a}=  {\mathbf g},
$$
where ${\mathbf g} = (g_{k})_{k=0}^{N-1}$ with $g_{k}:=(-1)^{k}  \left( \cos\left( \frac{\pi k}{2N}  \right) \right)^{-1} \widehat{f}_k $.
\end{enumerate}
\textbf{Output: } Parameter vectors ${\mathbf b}= (b_j)_{j=1}^{M}$, ${\mathbf a} = (a_j)_{j=1}^{M}$ determining $r_M(z)$ in (\ref{rn1}). }
\end{algorithm}

\subsection{Recovery of parameters $\phi_{j}  \in  \frac{\pi}{h N}\zz$}
\label{secper}

For reconstruction of a cosine sum  $f$ in (\ref{1.1}) from exact function values, we still need to study the problem of recovering frequency parameters $\phi_{j}$  satisfying
$\phi_j hN= k \pi$, $k \in \{0, \ldots , N-1\}$, since for these parameters we did not obtain a fractional structure of the DCT-II coefficients $\widehat{f}_{k}$ as exploited in Section \ref{sec31}.
 
Assume now that the function $f$ in (\ref{1.1}) also contains parameters $\phi_{j}$ such that $\phi_j hN= k \pi$, $k \in \{0, \ldots , N-1\}$, and there are exact function values given.  Then we can write $f(t)$ as a sum
$f(t) = f^{(1)}(t)
+ f^{(2)}(t)$, where 
\begin{equation}\label{f1} 
f^{(1)}(t) = \sum_{j=1}^{M_{1}} \gamma_{j} \, \cos(\phi_j t),  \qquad \phi_j h N \not \in \pi \zz, \ j=1,\ldots,M_1,
\end{equation}
and 
\begin{equation}\label{f2}  
f^{(2)}(t) =\sum_{j=M_1+1}^{M} \gamma_{j} \, \cos(\phi_j t),  \qquad \phi_j h N \in \{0, \ldots ,\pi( N-1)\}, \ j=M_1+1,\ldots ,M.
 \end{equation}
Let  $\hat{\mathbf f}^{(1)} = \big((\hat{f}^{(1)})_{k}\big)_{k=0}^{N-1}$  and $\hat{\mathbf f}^{(2)} = \big((\hat{f}^{(2)})_{k}\big)_{k=0}^{N-1}$ 
denote the DCT-II vectors of $\big(f^{(1)}_{\ell}\big)_{\ell=0}^{N-1}$ and $\big(f^{(2)}_{\ell}\big)_{\ell=0}^{N-1}$ respectively. Again we aim at exploiting the special structure of these two DCT-II-vectors.

It follows as in (\ref{dctfor}) that for $k=0,\ldots,N-1$,
$$
(-1)^{k}  \left( \cos\left( \frac{\pi k}{2N}  \right) \right)^{-1} \widehat{f}^{(1)}_k =   \sum\limits_{j=1}^{M_1}  \frac{\gamma_j \sin \left( \frac{ \phi_j h}{2} \right) \sin ( \phi_j h N) }{ \cos\left( \frac{\pi k}{N} \right)- \cos\left( \phi_j h  \right)   }.
 $$
Now we compute $\hat{\mathbf f}^{(2)} = \big(\hat{f}^{(2)}_{\ell}\big)_{\ell=0}^{N-1}$. As in (\ref{fk11}) we have for $k=0, \ldots , N-1$, 
\begin{equation}\label{s2}  
\widehat{f}^{(2)}_{k} = \textstyle \sum\limits_{j=M_{1}+1}^{M} \gamma_{j} \sum\limits_{\ell=0}^{N-1} \frac{1}{2} \left( \cos \left( (2\ell +1) \Big( \frac{\phi_{j} h}{2} + \frac{k\pi}{2N} \Big)\right) + \cos \left( (2\ell +1) \Big( \frac{\phi_{j} h}{2} - \frac{k\pi}{2N} \Big)\right) \right).
\end{equation}
We distinguish three cases. If  $k \not\in \frac{h N}{\pi} \{\phi_{M_{1}+1}, \ldots , \phi_{M}\}$, then $\left(\frac{\phi_{j}h}{2} \pm \frac{k\pi}{2N}\right) \not\in \pi\zz$, 
 and therefore (\ref{3new}) implies
$$ \widehat{f}^{(2)}_{k} = \sum_{j=M_{1}+1}^{M} \frac{\gamma_{j}}{4} \left( \frac{\sin ( \phi_{j} hN +k\pi)}{\sin\left( \frac{\phi_{j}h}{2} + \frac{k\pi}{2N} \right)} + \frac{\sin (\phi_{j} h N -k\pi)}{\sin\left( \frac{\phi_{j}h}{2} - \frac{k\pi}{2N} \right) } \right) =0. $$
If  $k \in \frac{h N}{\pi} \{\phi_{M_{1}+1}, \ldots , \phi_{M}\} \setminus \{ 0 \}$, say $k\pi=\phi_{j'} hN$, then we have  $\left(\frac{\phi_{j}h}{2} \pm \frac{k\pi}{2N} \right) \not\in \pi\zz$  for $j \in \{M_{1}+1, \ldots , M\} \setminus \{j'\}$.  Thus, from (\ref{s2})  applying again  (\ref{3new}) and (\ref{sum1}), we get  
\begin{align}
\widehat{f}^{(2)}_{k}  = & \textstyle \sum\limits_{\substack{j=M_{1}+1 \\ j\neq j'}}^{M} \frac{\gamma_{j}}{4} \left( \frac{\sin ( \phi_{j} hN +k\pi)}{\sin\left( \frac{\phi_{j}h}{2} + \frac{k\pi}{2N} \right)} + \frac{\sin (\phi_{j} h N -k\pi)}{\sin\left( \frac{\phi_{j}h}{2} - \frac{k\pi}{2N} \right) } \right) + \frac{\gamma_{j'}}{2} \sum\limits_{\ell=0}^{N-1} \left( \cos \left( (2\ell+1) \frac{k\pi}{N} \right) + \cos (0) \right)\notag \\
=& \frac{\gamma_{j'} \sin(2k \pi)}{4\sin\left(\frac{k\pi}{N} \right)} +  \textstyle \frac{\gamma_{j'}}{2} \, N= \frac{\gamma_{j'}}{2} \, N. \label{per}
\end{align}
Finally, if there exists a $\phi_{j'} \in \{\phi_{M_{1}+1}, \ldots , \phi_{M} \}$ with $\phi_{j'} =0$, then  (\ref{s2}) implies
$$ \widehat{f}^{(2)}_{0} = \gamma_{j'} N.$$
Summarizing,  we have for $k=0,\ldots,N$
\begin{equation}\label{fk22}
\widehat{f}_{k} = \begin{cases}
\widehat{f}^{(1)}_{k} & k \not\in \frac{h N}{\pi}  \{\phi_{M_{1}+1}, \ldots ,\phi_{M}\}, \\ 
\widehat{f}^{(1)}_{k} + \frac{N}{2} \gamma_{j'} & k=  \frac{\phi_{j'} h N}{\pi}  \in \frac{h N}{\pi}  \{\phi_{M_{1}+1}, \ldots ,\phi_{M}\} \setminus \{0\}, \\
\widehat{f}^{(1)}_{k} + N \gamma_{j'} & k= \frac{\phi_{j'} h N}{\pi} =0 \in \frac{h N}{\pi}  \{\phi_{M_{1}+1}, \ldots ,\phi_{M}\} .
\end{cases}
\end{equation}
In other words, we have $\widehat{f}_{k}= \widehat{f}^{(1)}_{k}$ for $N-M+M_{1}$ indices $k\in \{0, \ldots, N-1\}$.
Similarly as in \cite{DPP21}, we can therefore apply the ESPIRA-I Algorithm \ref{alg1} also in this case.
Then the application of the AAA Algorithm \ref{alg2} in the second step will lead to a rational function $r_{M_{1}}$ of type $(M_{1},M_{1})$ that satisfies $r_{M_{1}}(\cos(\frac{\pi k}{N})) = g_{k} = (-1)^{k} (\cos(\frac{\pi k}{2N}))^{-1} \, \widehat{f}_{k}$ for all $k \not\in \{\phi_{M_{1}+1}, \ldots ,\phi_{M}\}$, while the interpolation values at indices $k \in \{\phi_{M_{1}+1}, \ldots ,\phi_{M}\}$ will be recognized as so-called unachievable points.
Therefore, Algorithm \ref{alg1} will provide all parameters to recover the function $f^{(1)}$.
We refer to Section \ref{secconv}  to show that, for exact input data, Algorithm \ref{alg2}   indeed stops  after $M+1$ iteration steps regardless of occurring integer frequencies.
The periodic function $f^{(2)}$ can be determined in a post-processing step.
Considering the vector $\widehat{\mathbf f }^{(2)} = \left( \widehat{f}^{(2)}_{k} \right)_{k=0}^{N-1} = \left( \widehat{f}_{k} - \widehat{f}^{(1)}_{k}\right)_{k=0}^{N-1}$, we simply recognize the indices  $k$  corresponding to the nonzero components of $\widehat{\mathbf  f}^{(2)}$  and obtain $\phi_{j'} = \frac{k \pi}{h N}$ as well as the corresponding coefficients $\gamma_{j'}$ from (\ref{fk22}).

\begin{remark}
In case of noisy data or for function approximation, this special case of  frequencies $\phi_{j} = \frac{k \pi}{h N} $ with $k \in {\mathbb Z}$ does not usually occur. One indication of frequency parameters close to  $\frac{ \pi}{h N} \zz$ is provided by the weight vector in Algorithm \ref{alg2}. If components of ${\mathbf w}_{\mathbf S}$ are close to zero, then the  corresponding sample value is not interpolated but an unachievable point. In this case, a post-processing step to add frequencies from $\frac{ \pi}{h N} {\mathbb Z}$ may be applied.
\end{remark}

\subsection{Interpolation for exact input data}
\label{secconv}

In this subsection we will study the AAA Algorithm \ref{alg2} in our setting for the reconstruction of cosine sums (\ref{1.1})  from exact input data. We will show that  in this case the AAA algorithm will terminate after $M+1$  iteration steps and provides a rational function of type $(M-1,M)$ that satisfies all interpolation conditions (\ref{intp}). \\
Based on the observations above, we first consider the Loewner matrices ${\mathbf L}_{N-j,j}$ obtained in step 2 of Algorithm \ref{alg2} more closely.
We will prove that ${\mathbf L}_{N-j,j}$ has rank $M$ for any $j$ with $M\le j \le N-M$. 

\begin{theorem}\label{theo2}
Let $f$ be an $M$-sparse cosine sum as in $(\ref{1.1})$, and let ${\mathbf f} =( f_{\ell})_{\ell=0}^{N-1}$ with $f_{\ell} = f\left(\frac{h(2\ell+1)}{2} \right)$ be given, where $N>2M$. Further let $\hat{\mathbf f} = {\mathbf C}_{N}^{{II}} \, {\mathbf f}$  be the DCT-II transformed vector with ${\mathbf C}_{N}^{{II}}$ as in $(\ref{CC})$. Then, for any partition $S \cup \Gamma$ of $\{0, \ldots , N-1\}$ where both subsets have at least $M$ elements, i.e., $|S| \ge M$ and $|\Gamma| \ge M$, it follows that the Loewner matrix
$$ \textstyle {\mathbf L}_{|\Gamma|, |S|} = \left( \frac{g_{\ell} - g_{k}}{\cos \big(\frac{\pi \ell}{N}\big)- \cos \big(\frac{\pi k}{N} \big)} \right)_{\ell \in \Gamma, k \in S}  $$
with $g_{k} = (-1)^{k} \big(\cos \big(\frac{\pi k}{2N}\big)\big)^{-1} \hat{f}_{k}$ for $k=0, \ldots , N-1$, has exactly rank $M$.
\end{theorem}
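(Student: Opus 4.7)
The plan is to exploit the rational-function structure of the transformed data, already visible in (\ref{dctfor}), in order to produce an explicit rank factorization of ${\mathbf L}_{|\Gamma|,|S|}$.

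Under the genericity hypothesis $\phi_j h N \not\in \pi\zz$ for all $j$, equation (\ref{dctfor}) gives
\[g_k \;=\; \sum_{j=1}^{M} \frac{a_j}{z_k - b_j}, \qquad k=0,\ldots,N-1,\]
with $z_k := \cos(\pi k/N)$, $b_j := \cos(\phi_j h)$, and $a_j := \gamma_j \sin(\phi_j hN)\sin(\phi_j h/2)$. I would then invoke the elementary identity
\[\frac{1}{z_\ell - b_j} - \frac{1}{z_k - b_j} \;=\; -\,\frac{z_\ell - z_k}{(z_\ell - b_j)(z_k - b_j)}\]
to rewrite every divided difference as
\[\frac{g_\ell - g_k}{z_\ell - z_k} \;=\; -\sum_{j=1}^{M} \frac{a_j}{(z_\ell - b_j)(z_k - b_j)}.\]
This yields the explicit matrix factorization
\[{\mathbf L}_{|\Gamma|,|S|} \;=\; -\,{\mathbf C}_\Gamma \, \diag(a_j)_{j=1}^{M} \, {\mathbf C}_S^{T},\]
where ${\mathbf C}_\Gamma := \bigl(\tfrac{1}{z_\ell - b_j}\bigr)_{\ell \in \Gamma,\,j=1}^{M}$ and ${\mathbf C}_S := \bigl(\tfrac{1}{z_k - b_j}\bigr)_{k \in S,\,j=1}^{M}$ are submatrices of an $N\times M$ Cauchy matrix.

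It then remains to verify that each of the three factors has rank exactly $M$. For the diagonal factor, $\gamma_j \neq 0$ holds by hypothesis; $\sin(\phi_j h/2) \neq 0$ follows because $\phi_j \in (0,K)$ (the value $\phi_j = 0$ is excluded by the genericity assumption) together with $h = \pi/K$ place $\phi_j h/2$ in $(0,\pi/2)$; and $\sin(\phi_j h N) \neq 0$ is exactly the genericity assumption. For the two Cauchy factors, the nodes $z_0,\ldots,z_{N-1}$ are pairwise distinct since $\ell \mapsto \cos(\pi\ell/N)$ is injective on $\{0,\ldots,N-1\}$, the nodes $b_j$ are pairwise distinct by the hypothesis of (\ref{1.1}), and $z_\ell \neq b_j$ throughout by genericity. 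The classical nonvanishing of every Cauchy determinant with distinct row and column nodes then implies that every $M\times M$ minor of ${\mathbf C}_\Gamma$ and of ${\mathbf C}_S$ is nonzero; combined with the hypotheses $|\Gamma|, |S| \geq M$, this gives both Cauchy factors full column rank $M$. Hence $\rank {\mathbf L}_{|\Gamma|,|S|} = M$.

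The principal obstacle I anticipate is the degenerate case where $\phi_{j} h N \in \pi\zz$ for some $j$, in which (\ref{dctfor}) breaks down and the L'Hospital correction (\ref{lop}) together with the extra contributions (\ref{fk22}) of Section \ref{secper} must be invoked. The theorem as stated is naturally understood within the generic framework of Section \ref{sec31}, which is also the setting in which the subsequent convergence analysis of the AAA algorithm is formulated; the special-frequency case is handled separately by the postprocessing scheme already described.
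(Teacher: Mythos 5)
Your treatment of the generic case ($\phi_j h N \not\in \pi\zz$ for all $j$) is correct and is essentially identical to Part 1 of the paper's proof: the same divided-difference identity yields the factorization ${\mathbf L}_{|\Gamma|,|S|} = -{\mathbf C}_{|\Gamma|,M}\,\mathrm{diag}(a_j)\,{\mathbf C}_{|S|,M}^T$, and the full column rank of the Cauchy factors together with the nonvanishing of the diagonal entries gives rank exactly $M$. Your verification that $a_j \neq 0$ (via $\phi_j h/2 \in (0,\pi/2)$ and the genericity assumption) is a detail the paper leaves implicit, and it is handled correctly.

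However, there is a genuine gap: the theorem as stated carries no genericity hypothesis, and you explicitly decline to treat the case where some $\phi_j h N \in \pi\zz$. This case cannot be dismissed as outside the theorem's intended scope, because Corollary \ref{corconv} --- which governs the termination of the AAA algorithm precisely when integer-type frequencies are present --- invokes Theorem \ref{theo2} to conclude that ${\mathbf L}_{N-M-1,M+1}$ has rank $M$ in that setting. The paper's Part 2 handles it by splitting $f = f^{(1)} + f^{(2)}$ as in Section \ref{secper} and writing ${\mathbf L}_{|\Gamma|,|S|}$ as the sum of a rank-$M_1$ matrix (Cauchy-factored as before, using only the non-integer frequencies) and $M-M_1$ rank-one matrices of the form ${\mathbf e}_{\mathrm{ind}(k_j)}\,{\mathbf v}_j^T$ coming from the isolated nonzero entries $\hat{f}^{(2)}_{k_j} = \tfrac{N}{2}\gamma_j$ in (\ref{fk22}); the key observation is that the column span of the Cauchy factor and the span of the unit vectors ${\mathbf e}_{\mathrm{ind}(k_j)}$ intersect trivially, so the ranks add to give exactly $M$. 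Without this second half (or an equivalent argument), your proof establishes a strictly weaker statement than the one claimed, and the downstream use in Corollary \ref{corconv} would be unsupported.
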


\begin{proof}
1. Assume first that all frequencies $\phi_{j}$ satisfy $\phi_{j} \not\in \frac{ \pi}{h N} {\mathbb Z}$. 
Then by (\ref{dctfor}) we obtain 
\begin{align}
{\mathbf L}_{|\Gamma|, |S|} &= \left(\frac{\sum\limits_{j=1}^{M}  \frac{\gamma_j \, \sin \big(\frac{\phi_j h}{2} \big) \sin ( \phi_j hN) }{ \cos \big(\frac{\pi \ell}{N} \big)- \cos \big(\phi_j h \big)    } -  \sum\limits_{j=1}^{M}  \frac{\gamma_j \, \sin \big( \frac{\phi_j h}{2} \big) \sin (\phi_j hN) }{ \cos \big(\frac{\pi k}{N} \big)- \cos \big(\phi_j h\big)     }}{\cos \big(\frac{\pi \ell}{N} \big)- \cos \big(\frac{\pi k}{N} \big)} \right)_{\ell \in \Gamma, k \in S} \notag\\
&= \left(\frac{\sum\limits_{j=1}^{M} \gamma_j \, \sin  \big(\frac{\phi_j h}{2} \big) \sin ( \phi_j hN)  \frac{ \left(\cos \big(\frac{\pi k}{N} \big)- \cos \big(\phi_j h\big)   - \cos \big(\frac{\pi \ell}{N}\big) + \cos \big(\phi_j h\big) \right)}{\left(\cos \big( \frac{\pi k}{N} \big)- \cos \big(\phi_j h\big) \right) \left(\cos \big(\frac{\pi \ell}{N} \big)- \cos \big(\phi_j h\big)  \right)}}{\cos \big(\frac{\pi \ell}{N} \big) - \cos \big(\frac{\pi k}{N}\big)} \right)_{\ell \in \Gamma, k \in S} \notag\\
&= \left( - \sum_{j=1}^{M} \frac{\gamma_{j} \, \sin \big( \frac{\phi_{j} h}{2} \big) \sin ( \phi_{j} hN )}{\left(\cos \big(\frac{\pi k}{N} \big)- \cos \big( \phi_j h\big) \right) \left(\cos \big(\frac{\pi \ell}{N}\big) - \cos \big(\phi_j h\big)  \right)} \right)_{\ell \in \Gamma, k \in S} \notag\\
&=  \textstyle {\mathbf C}_{|\Gamma|,M}  \, \textrm{diag} \left( -\gamma_{j} \, \sin \big( \frac{\phi_{j} h}{2} \big) \sin \big(  \phi_{j} hN\big) \right)_{j=1}^{M} \, {\mathbf C}_{|S|,M}^{T} \label{factL}
\end{align}
with the Cauchy matrices
$$ \textstyle {\mathbf C}_{|\Gamma|,M} := \left( \frac{1}{\cos \big(\frac{\pi \ell}{N}\big)- \cos\big(\phi_j h \big)} \right)_{\ell \in \Gamma, j=1, \ldots, M}, \quad 
{\mathbf C}_{|S|,M}:= \left( \frac{1}{\cos \big( \frac{\pi k}{N}\big) - \cos \big(\phi_j h\big)} \right)_{k \in S, j=1, \ldots, M}. $$
The assertion now directly follows from this factorization since ${\mathbf C}_{|\Gamma|,M}$ and ${\mathbf C}_{|S|,M}$ have full column rank $M$, while the diagonal matrix has full rank by assumption.

2. Assume now that $f$ in (\ref{1.1})  also contains  frequencies $\phi_{j} \in \frac{ \pi}{h N} {\mathbb Z}$.
Then we can apply our considerations from  Section \ref{secper}. Assume that $f= f^{(1)} + f^{(2)}$ where $f^{(1)}$ contains the frequencies $\phi_{j} \not\in \frac{ \pi}{h N}{\mathbb Z}$ for $j=1, \ldots , M_{1}$, and $f^{(2)}$ contains the frequencies $\phi_{j}= \frac{ \pi}{h N}k_{j}$ with $k_{j} \in \{0, \ldots , N-1\}$ for $j=M_{1}+1, \ldots, M$.
We denote $g_{k}^{(1)} := (-1)^{k} \big(\cos \big(\frac{\pi k}{2N}\big)\big)^{-1} \hat{f}^{(1)}_{k}$ and 
$g_{k}^{(2)} := (-1)^{k} \big(\cos\big(\frac{\pi k}{2N}\big)\big)^{-1} \hat{f}^{(2)}_{k}$.
Then,  by (\ref{dctfor}) and (\ref{fk22}) 
we still have for all $k$ with $k \notin  \frac{ hN}{\pi} \{ \phi_{M_1+1}, \ldots , \phi_{M}\}$,
\begin{align*}
g_{k} &= \textstyle g_{k}^{(1)} = (-1)^{k} \big(\cos \big(\frac{\pi k}{2N} \big) \big)^{-1} \hat{f}^{(1)}_{k}  
= \sum\limits_{j=1}^{M_{1}}  \frac{\gamma_j \, \sin \big( \frac{h \phi_j }{2} \big) \sin (\phi_j hN) }{ \cos\big( \frac{\pi k}{N} \big)- \cos\big( \phi_j h\big)},\\
g_{k}^{(2)} &= 0,
\end{align*}
while for all $k_{j} = \frac{\phi_{j} hN}{\pi}$, $j=M_{1}+1, \ldots , M$, 
\begin{align*}
 g_{k_{j}}  &  = \textstyle (-1)^{k_{j}} \big(\cos \big(\frac{\pi k_{j}}{2N}\big)\big)^{-1} \big(\hat{f}^{(1)}_{k_{j}} + \hat{f}^{(2)}_{k_{j}} \big)=g_{k_{j}}^{(1)} + g_{k_{j}}^{(2)}\\
 &= \textstyle (-1)^{k_{j}} \big(\cos \big(\frac{\pi k_{j}}{2N}\big)\big)^{-1}  \hat{f}^{(1)}_{k_{j}} + (-1)^{k_{j}} \big(\cos \big(\frac{\pi k_{j}}{2N} \big) \big)^{-1} \epsilon_{j} N \frac{\gamma_{j}}{2}
 \end{align*}
with $\epsilon_{j} = 1$ for $k_{j} \neq 0$ and $\epsilon_{j} = 2$ for $k_{j} = 0$.
Assume that  $ k_{j} = \frac{\phi_{j} hN}{\pi} \in \Gamma$ for $j=M_{1}+1, \ldots , M$. 
Then it follows that the Loewner matrix ${\mathbf L}_{|\Gamma|,|S|}$ is of the form 
\begin{align}
 {\mathbf L}_{|\Gamma|,|S|} &= \widetilde{\mathbf L}_{|\Gamma|,|S|}+ \overset{\approx}{\mathbf L}_{|\Gamma|,|S|}, \label{loew}
 \end{align}
where  
\begin{align}
\widetilde{\mathbf L}_{|\Gamma|,|S|} &:=  \left( \frac{g_{\ell}^{(1)} - g_{k}^{(1)}}{\cos \big(\frac{\pi \ell}{N} \big)- \cos \big(\frac{\pi k}{N}\big)} \right)_{\ell \in \Gamma, k \in S} \notag \\
&=\left(\frac{\sum\limits_{j=1}^{M_{1}}  \frac{\gamma_j \, \sin \big( \frac{\phi_j h}{2} \big) \sin (\phi_j hN) }{ \cos\big( \frac{\pi \ell}{N} \big)- \cos\big( \phi_j h \big)   } -  \sum\limits_{j=1}^{M_{1}}  \frac{\gamma_j \, \sin \big( \frac{\phi_j h}{2} \big) \sin (\phi_j hN) }{ \cos\big( \frac{\pi k}{N} \big)- \cos\big( \phi_j h  \big)   }}{\cos \big(\frac{\pi \ell}{N} \big)- \cos\big(\frac{\pi k}{N}\big)} \right)_{\ell \in \Gamma, k \in S} \notag \\
&=  {\mathbf C}_{|\Gamma|,M_{1}}  \, \textrm{diag} \left( - \gamma_{j} \, \sin \big( \frac{\phi_{j} h}{2} \big) \sin \left( \phi_{j} hN \right) \right)_{j=1}^{M_{1}} \, {\mathbf C}_{|S|,M_{1}}^{T} \label{loew1}
\end{align}
corresponds to the function $f^{(1)}$ 
and where
\begin{align} 
\overset{\approx}{\mathbf L}_{|\Gamma|,|S|} &: = \textstyle \left( \frac{  g_{\ell}^{(2)} - g_{k}^{(2)}}{\cos \big(\frac{\pi \ell}{N}\big) - \cos\big(\frac{\pi k}{N}\big)} \right)_{\ell \in \Gamma, k \in S} =  \left( \frac{  g_{\ell}^{(2)}}{\cos \big(\frac{\pi \ell}{N}\big) - \cos\big(\frac{\pi k}{N}\big)} \right)_{\ell \in \Gamma, k \in S} \notag \\
&= \textstyle 
\sum\limits_{j=M_{1}+1}^{M} \epsilon_{j} \, N \frac{\gamma_{j}}{2} (-1)^{k_{j}}\big(\cos \big(\frac{\pi k_{j}}{2N}\big)\big)^{-1}\, {\mathbf e}_{\mathrm{ind}(k_{j})} \ \left( \frac{1}{\cos\big(\frac{\pi \, k_{j} }{N}\big) - \cos\big(\frac{\pi k}{N}\big) } \right)^{T}_{k \in S} \label{loew2}
\end{align}
corresponds to $f^{(2)}$. 
Here ${\mathbf e}_{\mathrm{ind}(k_{j})} $ denotes the $\mathrm{ind}(k_{j})$-th unit vector of length $|\Gamma|$ and $\mathrm{ind}(k_{j})$ denotes the position, or index,
of $k_j$ in the ordered set $\Gamma$.
Therefore we have rank $\widetilde{\mathbf L}_{|\Gamma|,|S|} = M_{1}$ and rank $\overset{\approx}{\mathbf L}_{|\Gamma|,|S|} = M-M_{1}$, such that rank ${\mathbf L}_{|\Gamma|,|S|} \le \mathrm{rank} \,\widetilde{\mathbf L}_{|\Gamma|,|S|} + \mathrm{rank} \,\overset{\approx}{\mathbf L}_{|\Gamma|,|S|} = M$.
The image of $\widetilde{\mathbf L}_{|\Gamma|,|S|}$ is spanned by the $M_{1}$ independent columns of the Cauchy matrix 
${\mathbf C}_{|\Gamma|,M_{1}} = \Big( \frac{1}{\cos \big(\frac{\pi \ell}{N}\big) - \cos \big(\phi_j h\big)} \Big)_{\ell \in \Gamma, j=1}^{M_{1}}$, while the image of $\overset{\approx}{\mathbf L}_{|\Gamma|,|S|}$ is spanned by the $M-M_{1}$ unit vectors ${\mathbf e}_{\mathrm{ind}(k_{j})}$, $j=M_{1}+1, \ldots , M$. Since these two spans are linearly independent, we have indeed  rank ${\mathbf L}_{|\Gamma|,|S|}= M$.
The remaining cases, where either  $k_{j} = \frac{\phi_{j} hN}{\pi}$, $j=M_{1}+1, \ldots , M$,  are all contained in $S$, or that the set $\{k_{j} = \frac{\phi_{j} hN}{\pi}, \, j=M_{1}, \ldots , M\}$ is split into two subsets, one contained in $\Gamma$ and one in $S$, can be treated similarly.
\end{proof}

Using Theorem \ref{theo2} we further show that  Algorithm \ref{alg2} will stop after  step $M+1$, since we will find a singular vector ${\mathbf w}$ in the kernel of ${\mathbf L}_{N-M-1,M+1}$. 
Thus, the minimal value of the sum in (\ref{mini}) will be equal to zero. First we consider the case when all parameters $\phi_{j}$ are not in $\frac{ \pi}{h N} {\mathbb Z}$.

\begin{corollary}\label{AAAconv}
Let $f$ be an $M$-sparse cosine sum as in $(\ref{1.1})$ with $\phi_{j} \not\in \frac{ \pi}{h N} {\mathbb Z}$ for $j=1,\ldots,M$, and let ${\mathbf f} =( f_{\ell})_{\ell=0}^{N-1}$ with $f_{\ell} = f\left(\frac{h(2\ell+1)}{2} \right)$ be given, where $N>2M$. Further let $\hat{\mathbf f} = {\mathbf C}_{N}^{{II}} \, {\mathbf f}$  be the DCT-II transformed vector with ${\mathbf C}_{N}^{{II}}$ as in $(\ref{CC})$.  Then Algorithm $\ref{alg2}$  terminates after  $M+1$ steps and determines 
 a partition $S_{M+1} \cup \Gamma_{M+1}$ of $I\coloneqq \{0, \ldots , N-1\}$ with $|S_{M+1}|=M+1$ and $|\Gamma_{M+1}| = N-M-1$ and a rational function $r_{M}(z)$ of type ($M-1, M)$ satisfying  interpolation conditions 
 \begin{equation}\label{intthe}
\textstyle r_M \left( \cos\left( \frac{\pi k}{N} \right)  \right) =(-1)^{k}
 \left( \cos\left( \frac{\pi k}{2N}  \right) \right)^{-1} \widehat{f}_k
\end{equation}
for $k=0,\ldots,N-1$.
 \end{corollary}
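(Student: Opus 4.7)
The plan is to combine Theorem~\ref{theo2} with a polynomial-identity argument to show that at iteration $M+1$ the AAA algorithm reproduces the exact rational interpolant, while no earlier termination is possible once the tolerance is chosen small enough.

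By (\ref{dctfor}) the rational function
\[ R(z) := \sum_{j=1}^{M} \frac{\gamma_{j}\,\sin(\phi_{j}h/2)\,\sin(\phi_{j}hN)}{z-\cos(\phi_{j}h)} \]
is of reduced type $(M-1,M)$ and satisfies $R(z_k)=g_k$ for every $k=0,\ldots,N-1$, where $z_k:=\cos(\pi k/N)$. At iteration $J=M+1$ one has $|S_{M+1}|=M+1$ and $|\Gamma_{M+1}|=N-M-1\geq M$ (using $N>2M$), so Theorem~\ref{theo2} yields $\rank\,\mathbf{L}_{N-M-1,M+1}=M$. The kernel is therefore one-dimensional and contains the algorithm's normalized right singular vector $\mathbf{w}$. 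Setting $\omega(z):=\prod_{k\in S_{M+1}}(z-z_k)$, I would clear denominators in the barycentric form and introduce $P(z):=\omega(z)\,\tilde{p}_M(z)$ and $Q(z):=\omega(z)\,\tilde{q}_M(z)$, polynomials of degree at most $M$. The kernel relation $\mathbf{L}\mathbf{w}=\mathbf{0}$ becomes $g_\ell Q(z_\ell)=P(z_\ell)$ for $\ell\in\Gamma_{M+1}$, while the direct evaluations $Q(z_k)=w_k\prod_{j\neq k}(z_k-z_j)$ and $P(z_k)=w_k g_k\prod_{j\neq k}(z_k-z_j)$ give $g_k Q(z_k)=P(z_k)$ for $k\in S_{M+1}$. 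Writing $R=P_R/Q_R$ with $\deg P_R\leq M-1$ and $\deg Q_R=M$, the polynomial $P_R Q-P Q_R$ then has degree at most $2M$ and vanishes at all $N\geq 2M+1$ nodes, hence is identically zero. Since $\mathbf{w}\neq\mathbf{0}$ forces $Q\not\equiv 0$ (as $Q(z_k)=0$ iff $w_k=0$), one concludes $\tilde{r}_M=P/Q=R$, which is of type $(M-1,M)$ and satisfies (\ref{intthe}).

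To rule out termination at any earlier iteration $J\leq M$, I would apply the same clearing-and-degree trick to $\tilde{r}_{J-1}=\tilde{p}_{J-1}/\tilde{q}_{J-1}$, which is of type $(J-1,J-1)$: if its residual vanished on all of $\Gamma_J$, then the analogous polynomial $P_R\tilde{Q}_J-\tilde{P}_J Q_R$ (obtained after clearing by $\omega_J(z):=\prod_{k\in S_J}(z-z_k)$) would have degree at most $M+J-1\leq 2M-1<N$ and possess $N$ zeros, hence vanish identically, forcing $\tilde{r}_{J-1}=R$. This is impossible because $R$ has exactly $M$ distinct poles while $\tilde{r}_{J-1}$ has at most $J-1<M$. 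Hence each intermediate residual is strictly positive, and for any tolerance below the finite minimum of these residuals the AAA loop advances through iterations $1,\ldots,M$ and terminates precisely at iteration $M+1$.

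The main subtlety I expect is that some components of the computed weight vector $\mathbf{w}$ may vanish, in which case the barycentric form does not automatically interpolate at the corresponding nodes of $S_{M+1}$. The polynomial-identity route sidesteps this by working with all $N$ nodes at once and invoking only the global degree bound, with the nondegeneracy $Q\not\equiv 0$ inherited from $\mathbf{w}\neq\mathbf{0}$.
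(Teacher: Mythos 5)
Your proposal is correct, and it rests on the same backbone as the paper's proof: invoke Theorem~\ref{theo2} to get $\operatorname{rank}{\mathbf L}_{N-M-1,M+1}=M$, hence a one-dimensional kernel containing the computed weight vector ${\mathbf w}$, and then identify $\tilde r_M$ with the exact partial-fraction interpolant $R$ of type $(M-1,M)$ from (\ref{dctfor}). Where you diverge is in the identification step. The paper uses the Cauchy factorization (\ref{factL}) to deduce ${\mathbf C}_{M+1,M}^{T}{\mathbf w}={\mathbf 0}$ and hence that \emph{all} components of ${\mathbf w}$ are nonzero, so that interpolation on $S_{M+1}$ holds by the barycentric construction; it then appeals to uniqueness of the type-$(M-1,M)$ interpolant, which it asserts rather than proves. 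You instead clear denominators and run the standard cross-multiplication argument: $P_RQ-PQ_R$ has degree at most $2M$ and vanishes at all $N>2M$ nodes, hence vanishes identically. This buys you two things: it supplies the uniqueness proof the paper omits, and it makes the conclusion independent of whether individual weights vanish (interpolation at $S_{M+1}$ then holds for the reduced rational function rather than via the barycentric limit). What you lose is the explicit fact that ${\mathbf w}$ has no zero entries, which the paper's route delivers as a by-product and which is relevant later (zero weights signal unachievable points in the integer-frequency case of Corollary~\ref{corconv}). Your exclusion of early termination by pole counting (a type-$(J-1,J-1)$ function with $J-1<M$ poles cannot equal $R$, which has exactly $M$ distinct poles since all $a_j\neq 0$ under the hypothesis $\phi_j\notin\frac{\pi}{hN}\mathbb{Z}$) matches the paper's brief remark that the data cannot be interpolated by a rational function of smaller type, and is no less rigorous than the paper about the distinction between an exactly vanishing residual and one below a prescribed tolerance.
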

\begin{proof}
First observe that a rational function $r_{M}(z) = \frac{p_{M-1}(z)}{q_{M}(z)} $ with numerator polynomial  $p_{M-1}(z)$ of degree at most $M-1$ and denominator polynomial $q_{M}(z)$ of degree $M$ is already completely  determined by $2M$ (independent) interpolation conditions 
 if the rational interpolation problem is solvable at all.  But solvability is clear because of (\ref{dctfor}).  In particular, the given data $(-1)^{k}
 \left( \cos\big( \frac{\pi k}{2N}  \big) \right)^{-1} \widehat{f}_k$, $k \in I$, cannot be interpolated by a rational function of smaller type than $(M-1,M)$.

Assume now that at the $(M+1)$-st iteration step in Algorithm \ref{alg2}, the index set $S_{M+1} \subset I$ with $M+1$ interpolation indices has been chosen, and let $\Gamma_{M+1} = I \setminus S_{M+1}$.
Then the Loewner matrix ${\mathbf L}_{N-M-1,M+1}$ obtained after $M+1$ iterations steps has rank $M$ according to Theorem \ref{theo2}. 
Therefore, the kernel of ${\mathbf L}_{N-M-1,M+1}$ has dimension $1$, and the normalized  vector ${\mathbf w}={\mathbf w}_{S}$ in step 3 of Algorithm \ref{alg2} satisfies ${\mathbf L}_{N-M-1,M+1} \, {\mathbf w}={\mathbf 0}$.
According to (\ref{factL}) we observe that 
$$
{\mathbf C}_{M+1,M}^{T} \, {\mathbf w} =\left( \frac{1}{\cos \big( \frac{\pi k}{N}\big) - \cos \big(\phi_j h\big)} \right)_{ j=1, \ldots, M, \ k \in S_{M+1}} \, {\mathbf w} = {\mathbf 0}. 
 $$
Since any $M$ columns of the Cauchy matrix ${\mathbf C}_{M+1,M}^{T}$ are linearly independent, we conclude that all components of ${\mathbf w}$ are nonzero.
By construction it follows that a rational function $\tilde{r}_M$ in (\ref{bar-form-1}) satisfies  interpolation conditions (\ref{intthe}) for $k \in S_{M+1}$. On the other hand, since ${\mathbf L}_{N-M-1,M+1} \, {\mathbf w}={\mathbf 0}$ we find that
$$
\sum\limits_{k\in S_{M+1}}\left(\frac{g_{\ell} w_k}{\cos\frac{\pi \ell}{N} -\cos\frac{\pi k}{N} } - \frac{g_{k} w_k}{\cos\frac{\pi \ell}{N} -\cos\frac{\pi k}{N} }   \right)=0,  \  \  \ell\in \Gamma_{M+1}, 
$$
with $g_k=(-1)^{k}
 \left( \cos\big( \frac{\pi k}{2N}  \big) \right)^{-1} \widehat{f}_k$,
which implies that for $\tilde{r}_M$ in (\ref{bar-form-1}) the interpolation conditions (\ref{intthe}) hold also for $\ell \in \Gamma_{M+1}$. Since a rational function satisfying all interpolation conditions is uniquely determined, the obtained rational function $\tilde{r}_M$ in barycentric form (\ref{bar-form-1}) coincides with $r_M$ in (\ref{rat}) and needs to have type $(M-1,M)$. That means that Algorithm \ref{alg2} provides the wanted rational function $r_{M}(z)$. 
\end{proof}

\noindent
Similarly, we show the following result for the case of 
 frequency parameters $\phi_{j} \in \frac{ \pi}{h N} {\mathbb Z}$.
\begin{corollary}\label{corconv}
Let $f(t) = f_{1}(t)+ f_{2}(t)$ be of the form $(\ref{f1})$ and $(\ref{f2})$  with $0 \le M_{1} <M \in {\mathbb N}$.
Let ${\mathbf f} =( f_{\ell})_{\ell=0}^{N-1}$ with $f_{\ell} = f\left(\frac{h(2\ell+1)}{2} \right)$ be given, where $N>2M$. Further let $\hat{\mathbf f} = {\mathbf C}_{N}^{{II}} \, {\mathbf f}$  be the DCT-II transformed vector with ${\mathbf C}_{N}^{{II}}$ as in $(\ref{CC})$.   Then the modification of Algorithm $\ref{alg2}$  described in Section $\ref{secper}$ terminates after  $M+1$ steps and determines 
 a partition $S_{M+1} \cup \Gamma_{M+1}$ of $I \coloneqq \{0, \ldots , N-1\}$ and a rational function $r_{M_{1}}(z)$ of type ($M_{1}-1, M_{1})$ satisfying  
 \begin{equation}\label{intthe1}
 \textstyle r_{M_1} \left( \cos\big( \frac{\pi k}{N} \big)  \right) =(-1)^{k}
 \left( \cos\big( \frac{\pi k}{2N}  \big) \right)^{-1} \widehat{f}^{(1)}_k,
\end{equation}
 $k=0, \ldots , N-1$.
\end{corollary}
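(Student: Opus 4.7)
The plan is to mirror the proof of Corollary \ref{AAAconv} but to use the additive decomposition of the Loewner matrix established in equations (\ref{loew})--(\ref{loew2}) of Theorem \ref{theo2}. First I would argue that the modified AAA procedure cannot terminate before step $M+1$: by Theorem \ref{theo2} every Loewner matrix ${\mathbf L}_{N-j,j}$ with $j<M$ has rank $j$, so its kernel is trivial and the resulting rational function of type $(j-1,j)$ cannot match more than $2j$ interpolation data exactly, hence the termination criterion $\|{\mathbf r}-{\mathbf g}_{\mathbf\Gamma}\|_\infty < tol$ is not reached as long as $j\le M$.

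At iteration $M+1$, Theorem \ref{theo2} gives $\mathrm{rank}\,{\mathbf L}_{N-M-1,M+1}=M$, so its kernel is one-dimensional and the normalized right singular vector ${\mathbf w}={\mathbf w}_{S_{M+1}}$ from Step 3 satisfies ${\mathbf L}_{N-M-1,M+1}\,{\mathbf w} = {\mathbf 0}$. Using the splitting $ {\mathbf L}_{N-M-1,M+1} = \widetilde{\mathbf L}_{N-M-1,M+1}+ \overset{\approx}{\mathbf L}_{N-M-1,M+1}$ from (\ref{loew}), together with the fact that the image of $\widetilde{\mathbf L}$ is spanned by $M_1$ Cauchy-columns while the image of $\overset{\approx}{\mathbf L}$ is spanned by unit vectors $\mathbf{e}_{\mathrm{ind}(k_j)}$ supported only on $\Gamma_{M+1}$, the equation ${\mathbf L}\,{\mathbf w}={\mathbf 0}$ forces both contributions to vanish separately. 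Writing this out explicitly, one gets
\[
{\mathbf C}_{M+1,M_1}^{T}\,{\mathbf w}={\mathbf 0}, \qquad w_{k_j}=0 \quad \text{for all } k_j=\tfrac{\phi_j hN}{\pi}\in S_{M+1},
\]
which both pins down ${\mathbf w}$ uniquely (up to sign) on the $M_1+1$ remaining support indices and identifies the other $M-M_1$ supports of $S_{M+1}$ as unachievable points.

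Next I would analyze the barycentric rational function $\tilde{r}_{M}$ in (\ref{bar-form-1}) induced by this ${\mathbf w}$. Because the weights $w_{k_j}$ vanish at the $M-M_1$ integer-frequency indices, both numerator and denominator in (\ref{bar-form-1}) reduce to a barycentric form supported on only $M_1+1$ nodes, which is a rational function of type $(M_1-1,M_1)$. On the $M_1+1$ support indices the interpolation conditions $\tilde r_{M}(\cos(\pi k /N))=g_k^{(1)}$ hold automatically since $g_k=g_k^{(1)}$ there by (\ref{fk22}), while on the $M-M_1$ integer-frequency indices $\cos(\pi k_j/N)$ is a pole of $\tilde q_M$ and $\tilde p_M$ of the same order, so these points are recognized as unachievable. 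Finally, the identity ${\mathbf L}_{N-M-1,M+1}\,{\mathbf w}={\mathbf 0}$ translates, exactly as in the proof of Corollary \ref{AAAconv}, into the interpolation conditions (\ref{intthe1}) at every $\ell\in\Gamma_{M+1}$ that is not an integer-frequency index.

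The main obstacle will be the case distinction for how the set $\{k_{M_1+1},\ldots,k_M\}$ of integer-frequency indices is distributed between $S_{M+1}$ and $\Gamma_{M+1}$: I expect to handle the three situations (all in $S_{M+1}$, all in $\Gamma_{M+1}$, or split) along the lines of the end of the proof of Theorem \ref{theo2}, each time verifying that the kernel vector has the structure claimed above. A secondary subtlety is to show that the greedy selection rule in Step 1 of Algorithm \ref{alg2} actually drives the iteration to carry on for exactly $M+1$ steps, which will follow from the fact that $\tilde r_{j-1}$ can interpolate at most $2j$ of the data for $j\le M$ and that for $j=M+1$ one obtains exact interpolation at all non-unachievable indices.
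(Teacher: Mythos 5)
Your overall architecture coincides with the paper's: at step $M+1$ the Loewner matrix has rank $M$ by Theorem~\ref{theo2}, its one-dimensional kernel supplies the weight vector ${\mathbf w}$, the additive splitting (\ref{loew})--(\ref{loew2}) forces $\widetilde{\mathbf L}_{N-M-1,M+1}{\mathbf w}={\mathbf 0}$ and $\overset{\approx}{\mathbf L}_{N-M-1,M+1}{\mathbf w}={\mathbf 0}$ separately (because the Cauchy columns and the unit vectors span independent subspaces), and the first of these identities yields the interpolation conditions (\ref{intthe1}) for the data $g^{(1)}_k$. Two small remarks on that part: since $\widetilde{\mathbf L}$ is built entirely from $g^{(1)}$, it gives (\ref{intthe1}) at \emph{all} $\ell\in\Gamma_{M+1}$, including the integer-frequency indices, so your restriction to non-integer-frequency $\ell$ is unnecessarily weak; and the rank statement for ${\mathbf L}_{N-j,j}$ with $j\le M$ is not literally covered by Theorem~\ref{theo2} (which assumes $|S|\ge M$) --- the paper proves full column rank $j$ ``similarly'', and you would have to do the same rather than cite the theorem.

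The step that would fail as written is your description of the kernel vector. You claim that ${\mathbf L}{\mathbf w}={\mathbf 0}$ forces ${\mathbf C}_{M+1,M_1}^{T}{\mathbf w}={\mathbf 0}$ together with $w_{k_j}=0$ for $k_j\in S_{M+1}$, that these conditions determine ${\mathbf w}$ up to sign on $M_1+1$ remaining nodes, and that the vanishing weights collapse (\ref{bar-form-1}) to a barycentric form on $M_1+1$ nodes of type $(M_1-1,M_1)$. This is coherent only when all $M-M_1$ integer-frequency indices lie in $S_{M+1}$. In the configuration the paper writes out --- all $k_j\in\Gamma_{M+1}$ --- the condition $\overset{\approx}{\mathbf L}{\mathbf w}={\mathbf 0}$ gives instead the $M-M_1$ Cauchy-row conditions $\big(\tfrac{1}{\cos(\pi k_j/N)-\cos(\pi k/N)}\big)^{T}_{k\in S_{M+1}}{\mathbf w}={\mathbf 0}$; combined with ${\mathbf C}_{M+1,M_1}^{T}{\mathbf w}={\mathbf 0}$ one obtains $M$ conditions from a Cauchy matrix with the $M$ distinct nodes $\cos(\phi_j h)$, any $M$ columns of which are independent, so \emph{every} component of ${\mathbf w}$ is nonzero and no node drops out of the barycentric form. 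There the reduction to type $(M_1-1,M_1)$ cannot be obtained by deleting zero-weight terms; it must come from the uniqueness/cancellation argument of Corollary~\ref{AAAconv}: $\tilde r_M$ agrees with the partial-fraction function $r_{M_1}$ from (\ref{dctfor}) at the $N-(M-M_1)>M+M_1$ nodes where $g_k=g^{(1)}_k$ by (\ref{fk22}), so the two rational functions coincide after cancellation. You defer the case distinction to the end, but the kernel structure you propose to verify in each case is not the one that actually occurs in this case, so this is a genuine gap in the degree-reduction step rather than mere bookkeeping.
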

\begin{proof}
At the $(M+1)$-st iteration step of Algorithm \ref{alg2} we obtain a Loewner matrix \linebreak
${\mathbf L}_{N-M-1, M+1}$, which according to Theorem \ref{theo2} has rank $M$. Similarly  as in the proof of Theorem \ref{theo2} we can show that a Loewner matrix  ${\mathbf L}_{N-J, J}$, computed at the $J$-th iteration step of Algorithm \ref{alg2} with $J<M+1$, has full column rank $J$. Thus the Algorithm \ref{alg2} does not stop earlier than after $M+1$ iteration steps.

At the $(M+1)$-st iteration step of Algorithm \ref{alg2}  we obtain a partition $S_{M+1}\cup \Gamma_{M+1}$ of the set $I$. 
Similarly as in the proof of Theorem \ref{theo2}, we assume that $k_{j} = \frac{\phi_{j} hN}{\pi} \in \Gamma_{M+1}$ for $j=M_{1}+1, \ldots , M$. Then the Loewner matrix ${\mathbf L}_{N-M-1, M+1}$ has the structure as in (\ref{loew}) with sets $\Gamma_{M+1}$ and $S_{M+1}$ instead of $\Gamma$ and $S$ respectively.  Since the Loewner matrix ${\mathbf L}_{N-M-1, M+1}$ has rank $M$, the Loewner matrix $\tilde{\mathbf L}_{N-M-1, M+1}$ has rank $M_1$ due to factorization (\ref{loew1}) and we have $M-M_1$ frequency parameters $\phi_{j}$ satisfying  $\phi_j h N \in \{0, \ldots ,\pi( N-1)\}, \ j=M_1+1,\ldots ,M$.
Then, each of $M-M_1$ rank-1 matrices in (\ref{loew2}) enlarges the rank of ${\mathbf L}_{N-M-1, M+1}$ by 1. 
Therefore, the normalized kernel vector  ${\mathbf w} = {\mathbf w}_{S}\in \cc^{M+1}$ in step 3 of Algorithm \ref{alg2} is uniquely defined and satisfies  ${\mathbf L}_{N-M-1,M+1} \, {\mathbf w}={\mathbf 0}$ as well as $\tilde{{\mathbf L}}_{N-M-1,M+1} \, {\mathbf w}={\mathbf 0}$ and $\left( \frac{1}{\cos\big(\frac{\pi \, k_{j} }{N}\big) - \cos\big(\frac{\pi k}{N}\big) } \right)^{T}_{k \in S_{M+1}}  \, {\mathbf w}={\mathbf 0}$.
Let $\tilde{r}_M$ be a rational function in barycentric form (\ref{bar-form-1}) constructed by Algorithm \ref{alg2} after $M+1$ iteration steps. Then from $\tilde{{\mathbf L}}_{N-M-1,M+1} \, {\mathbf w}={\mathbf 0}$ we get the interpolation conditions (\ref{intthe1}) for $k \in \Gamma_{M+1}$. According to the construction procedure we find  (\ref{intthe1})  also for $k\in S_{M+1}$.
Further we simplify $\tilde{r}_{M}$ by removing all zero components of ${\mathbf w}$ in definition (\ref{bar-form-1}). Then, instead of  $\tilde{r}_{M}$ we obtain a rational function $r_{M_1}$ of type $(M_1-1,M_1)$ satisfying (\ref{intthe1}). 
If the frequencies $k_{j} = \frac{\phi_{j} hN}{\pi}$ are contained in $S_{M+1}$ or in both index sets $S_{M+1}$ and  $\Gamma_{M+1}$, the assertions can be shown similarly.
\end{proof}

\subsection{ESPIRA II: ESPIRA as matrix pencil method for Loewner matrices}

At the first glance, the two algorithms ESPRIT in Algorithm \ref{algESPRIT} and ESPIRA-I in Algorithm \ref{alg1} for reconstruction of cosine sums seem to be completely unrelated.
In this section, we will show that the ESPIRA algorithm can be also understood as a matrix pencil method, but for Loewner instead of Hankel$+$Toeplitz matrices.

Next, we want to show how the wanted frequency parameters  $\phi_{j}$ of $f$ in (\ref{1.1}) can be also obtained by solving a matrix pencil problem for Loewner matrices.
\begin{theorem}\label{theo3}
Let $f$ be an $M$-sparse cosine sum as in $(\ref{1.1})$, and let ${\mathbf f} =( f_{\ell})_{\ell=0}^{N-1}$ with $f_{\ell} = f\big(\frac{h(2\ell+1)}{2} \big)$ be given, where $N>2M$. Further let $\hat{\mathbf f} = {\mathbf C}_{N}^{{II}} \, {\mathbf f}$  be the DCT-II transformed vector with ${\mathbf C}_{N}^{{II}}$ as in $(\ref{CC})$. 
Let  $S_{M} \cup \Gamma_{M}$ be a partition of $\{0, \ldots , N-1\}$ with $|S_{M}|= M$ and $|\Gamma_{M}| = N-M$. Then the values $z_{j}= \cos \big(\phi_{j} h\big)$, $j=1, \ldots, M$, with frequencies $\phi_{j}$ of $f$ in $(\ref{1.1})$
are the eigenvalues of the Loewner matrix pencil 
\begin{equation}\label{mpl} z {\mathbf L}^{(0)}_{N-M,M} - {\mathbf L}^{(1)}_{N-M,M}
\end{equation}
with 
$$ \textstyle 
{\mathbf L}^{(0)}_{N-M,M}= \left( \frac{g_{\ell} - g_{k}}{\cos \big(\frac{\pi \ell}{N}\big) - \cos \big(\frac{\pi k}{N}\big)} \right)_{\ell \in \Gamma_{M}, k \in S_{M}},
 {\mathbf L}^{(1)}_{N-M,M}= \left( \frac{g_{\ell} \, \cos \big(\frac{\pi \ell}{N}\big)- g_{k} \, \cos \big(\frac{\pi k}{N}\big)}{\cos \big( \frac{\pi \ell}{N}\big) - \cos \big(\frac{\pi k}{N}\big)} \right)_{\ell \in \Gamma_{M}, k \in S_{M}}, $$
where $g_{k} = (-1)^{k} \big(\cos\big(\frac{\pi k}{2N}\big)\big)^{-1} \hat{f}_{k}$ for $k=0, \ldots , N-1$.
\end{theorem}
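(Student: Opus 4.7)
The plan is to derive explicit Cauchy factorizations of both Loewner matrices $\mathbf{L}^{(0)}_{N-M,M}$ and $\mathbf{L}^{(1)}_{N-M,M}$ that share the same outer factors, so that the pencil collapses to a diagonal matrix whose entries carry the spectral information. Throughout I assume the generic case $\phi_j h \not\in \frac{\pi}{N}\mathbb{Z}$ so that (\ref{dctfor}) applies; the exceptional case can be handled as in the proof of Theorem \ref{theo2} by splitting $f = f^{(1)} + f^{(2)}$ and adding rank-one corrections to both $\mathbf{L}^{(0)}$ and $\mathbf{L}^{(1)}$.

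First I would recall from (\ref{dctfor}) that, writing $z_k \coloneqq \cos\big(\frac{\pi k}{N}\big)$, $b_j \coloneqq \cos(\phi_j h)$ and $a_j^{\ast} \coloneqq \gamma_j \sin\big(\frac{\phi_j h}{2}\big)\sin(\phi_j h N)$, we have
$$ g_k = \sum_{j=1}^{M} \frac{a_j^{\ast}}{z_k - b_j}, \qquad k=0,\ldots,N-1. $$
Inserting this into $\mathbf{L}^{(0)}_{N-M,M}$ and using the same partial-fraction telescoping as in (\ref{factL}) yields
$$ \mathbf{L}^{(0)}_{N-M,M} = -\,\mathbf{C}_{N-M,M}\,\mathrm{diag}(a_j^{\ast})_{j=1}^{M}\,\mathbf{C}_{M,M}^{T}, $$
with the Cauchy factors $\mathbf{C}_{N-M,M} = \big(\frac{1}{z_\ell - b_j}\big)_{\ell\in\Gamma_M,\,j=1,\ldots,M}$ and $\mathbf{C}_{M,M} = \big(\frac{1}{z_k - b_j}\big)_{k\in S_M,\,j=1,\ldots,M}$, both of full column rank $M$.

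The key step is to derive an analogous factorization for $\mathbf{L}^{(1)}_{N-M,M}$. Using the identity $\frac{z_\ell}{z_\ell - b_j} = 1 + \frac{b_j}{z_\ell - b_j}$, the constant $1$ cancels in the difference $g_\ell z_\ell - g_k z_k$, and an entirely parallel telescoping gives
$$ \frac{g_\ell z_\ell - g_k z_k}{z_\ell - z_k} = -\sum_{j=1}^{M} \frac{a_j^{\ast}\,b_j}{(z_\ell - b_j)(z_k - b_j)}, $$
so that
$$ \mathbf{L}^{(1)}_{N-M,M} = -\,\mathbf{C}_{N-M,M}\,\mathrm{diag}(a_j^{\ast}\,b_j)_{j=1}^{M}\,\mathbf{C}_{M,M}^{T}. $$
Working out this telescoping cleanly is the main technical obstacle; everything else is a transcription of Theorem \ref{theo2}'s argument.

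Combining the two factorizations gives
$$ z\,\mathbf{L}^{(0)}_{N-M,M} - \mathbf{L}^{(1)}_{N-M,M} = -\,\mathbf{C}_{N-M,M}\,\mathrm{diag}\big(a_j^{\ast}(z-b_j)\big)_{j=1}^{M}\,\mathbf{C}_{M,M}^{T}. $$
Since both outer Cauchy factors have full column rank $M$ (the nodes $z_\ell$, $z_k$, $b_j$ being pairwise distinct) and all coefficients $a_j^{\ast}$ are nonzero in the generic case, the pencil has rank $M$ for $z\not\in\{b_1,\ldots,b_M\}$ and its rank drops to $M-1$ precisely at each $z=b_j=\cos(\phi_j h)$. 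Hence the $M$ generalized eigenvalues of the rectangular pencil (\ref{mpl}) are exactly $\cos(\phi_j h)$, $j=1,\ldots,M$, from which the frequencies $\phi_j$ are recovered via $\phi_j = \arccos(b_j)/h$.
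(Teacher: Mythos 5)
Your proposal is correct and follows essentially the same route as the paper: both rest on the twin Cauchy factorizations $\mathbf{L}^{(0)}=-\mathbf{C}_{N-M,M}\,\mathrm{diag}(a_j^{\ast})\,\mathbf{C}_{M,M}^{T}$ and $\mathbf{L}^{(1)}=-\mathbf{C}_{N-M,M}\,\mathrm{diag}(a_j^{\ast}b_j)\,\mathbf{C}_{M,M}^{T}$, with the rank of the pencil dropping to $M-1$ exactly at $z=b_j$, and the paper likewise treats the frequencies in $\frac{\pi}{hN}\mathbb{Z}$ by adding rank-one terms to both matrices. Your shortcut $\frac{z_\ell}{z_\ell-b_j}=1+\frac{b_j}{z_\ell-b_j}$ is just a cleaner way of carrying out the numerator cancellation that the paper does by direct expansion.
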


\begin{proof}
1. Assume first that all wanted frequencies $\phi_{j}$ are not in $\frac{\pi}{hN} {\mathbb Z}$. Then, as in (\ref{factL}),  we find for ${\mathbf L}^{(0)}_{N-M,M}$ the factorization 
\begin{align*} \textstyle 
{\mathbf L}^{(0)}_{N-M,M} = {\mathbf C}_{N-M,M}  \, \textrm{diag} \left(- \gamma_{j} \, \sin \big( \frac{\phi_{j} h}{2} \big) \sin \left(  \phi_{j} hN \right) \right)_{j=1}^{M} \, {\mathbf C}_{M,M}^{T}
\end{align*}
with Cauchy matrices 
$$ \textstyle {\mathbf C}_{N-M,M} := \left( \frac{1}{\cos \big(\frac{\pi \ell}{N}\big) - \cos ( \phi_j h) }\right)_{\ell \in \Gamma_{M}, j=1, \ldots , M}, \;  
{\mathbf C}_{M,M}:= \left( \frac{1}{\cos \big(\frac{\pi k}{N}\big)- \cos( \phi_jh) } \right)_{k \in S_{M}, j=1,  \ldots , M}.$$ 
For the second Loewner matrix we find with (\ref{dctfor})
\begin{align*}
& {\mathbf L}^{(1)}_{N-M,M} 
= \textstyle
\left(   \frac{ \cos \big(\frac{\pi\ell}{N}\big) \, \sum\limits_{j=1}^{M}  \frac{\gamma_j \, \sin \big( \frac{\phi_j h}{2} \big) \sin (\phi_j hN) }{ \cos\big(\frac{\pi \ell}{N}\big) - \cos (\phi_j h)     } -  \cos\big(\frac{\pi k}{N}\big)\,  \sum\limits_{j=1}^{M}  \frac{\gamma_j \, \sin \big( \frac{\phi_j h}{2} \big) \sin (\phi_j hN) }{ \cos\big(\frac{\pi k}{N}\big) - \cos (\phi_j h)    }}{\cos \big(\frac{\pi \ell}{N}\big) - \cos \big(\frac{\pi k}{N}\big)} \right)_{\ell \in \Gamma_{M}, k \in S_{M}}\\
&= \!\textstyle \left( \sum\limits_{j=1}^{M}\!  \frac{\gamma_j \, \sin \! \big(\frac{\phi_j h}{2}\big)   \sin (\phi_j hN)  \left(\cos \!\big(\frac{\pi\ell}{N}\big)  \big(\cos \! \big(\frac{\pi k}{N}\big) - \cos(\phi_{j} h)\big) - \cos\big(\frac{\pi k}{N}\big) \big(\cos \big(\frac{\pi \ell}{N}\big) - \cos(\phi_{j}h)\big)\right)}{\big(\cos \left(\frac{\pi k}{N}\right) - \cos(\phi_{j} h)\big) \big(\cos\left(\frac{\pi \ell}{N}\right) - \cos(\phi_{j} h)\big) \big(\cos \left(\frac{\pi \ell}{N}\right) - \cos \frac{\pi k}{N}\big)} \!\right)_{\!\ell \in \Gamma_{M}, k \in S_{M}} \\
&= \textstyle -\left( \sum\limits_{j=1}^{M}  \frac{\gamma_j \, \sin  \big(\frac{\phi_j h}{2} \big)\,  \sin (\phi_j hN) \, \cos \big(\phi_{j}h \big)}{\big(\cos \big(\frac{\pi k}{N}\big) - \cos \big(\phi_{j} h\big) \big) \big(\cos \big(\frac{\pi \ell}{N}\big) - \cos\big(\phi_{j} h\big)\big) } \right)_{\ell \in \Gamma, k \in S}\\
&= \textstyle {\mathbf C}_{N-M,M}  \, \textrm{diag} \left( -\gamma_{j} \, \sin \big( \frac{\phi_{j} h}{2} \big)\, \sin (\phi_{j} hN )\,  \cos \big(\phi_{j} h\big) \!\right)_{j=1}^{M} \, {\mathbf C}_{M,M}^{T}.
\end{align*}
Therefore
\begin{align*} \textstyle
& z {\mathbf L}^{(0)}_{N-M,M} - {\mathbf L}^{(1)}_{N-M,M}\\
& =  \textstyle {\mathbf C}_{N-M,M}  \, \textrm{diag} \left( -\gamma_{j} \, \sin  \big(\frac{\phi_{j} h}{2}\big) \, \sin (\phi_{j} hN) \,  \big(z-\cos(\phi_{j}h)\big) \right)_{j=1}^{M} \, {\mathbf C}_{M,M}^{T}
\end{align*}
has a reduced rank $M-1$ if and only if $z-\cos(\phi_{j}h) = 0$. 

2. Assume now that $\phi_{j} \not\in \frac{\pi}{hN}{\mathbb Z}$ for $j=1,\ldots , M_{1}$ and that $\frac{h N}{\pi} \phi_{j} = k_{j} \in \{0, \ldots , N-1\}$ for $j=M_{1}+1, \ldots , M$. Similarly as in the proof of Theorem \ref{theo2}, assume for simplicity that all these indices $k_{j}$ are in $\Gamma_{M}$. Then we find as in the proof of Theorem \ref{theo2} that 
\begin{align*}
{\mathbf L}_{N-M,M}^{(0)} &=  \textstyle
{\mathbf C}_{N-M,M_{1}}  \, \textrm{diag} \left( -\gamma_{j} \, \sin \big( \frac{\phi_{j} h}{2} \big) \sin \big( \phi_{j} hN\big) \right)_{j=1}^{M_{1}} \, {\mathbf C}_{M,M_{1}}^{T} \\
& + \textstyle
 \sum\limits_{j=M_{1}+1}^{M} \epsilon_{j} N\, (-1)^{k_{j}} \, \big(\cos \big(\frac{k_{j} \pi}{2N}\big)\big)^{-1} \, \frac{\gamma_{j}}{2} \, {\mathbf e}_{\mathrm{ind}(k_{j})} \ \left( \frac{1}{\cos\big(\frac{ \pi \,  k_{j} }{N}\big) - \cos\big(\frac{\pi k}{N}\big) } \right)^{T}_{k \in S_{M}}. 
\end{align*}
For the second matrix we find analogously as above
\begin{align*}
{\mathbf L}_{N-M,M}^{(1)} & =  \textstyle
{\mathbf C}_{N-M,M_{1}}  \, \textrm{diag} \left(- \gamma_{j} \, \sin \big(\frac{\phi_{j} h}{2} \big)\, \sin  ( \phi_{j} hN) \, \cos \big(\phi_{j} h \big) \right)_{j=1}^{M_{1}} \, {\mathbf C}_{M,M_{1}}^{T} \\
& + \textstyle
 \sum\limits_{j=M_{1}+1}^{M} \epsilon_{j} N\, (-1)^{k_{j}} \, \big(\cos \big(\frac{k_{j} \pi}{2N}\big)\big)^{-1} \, \frac{\gamma_{j}}{2} \, {\mathbf e}_{\mathrm{ind}(k_{j})}  \ \left( \frac{\cos \big( \phi_{j} h\big)}{\cos\big(\frac{\pi\,  k_{j}}{N}\big) - \cos\big(\frac{\pi k}{N}\big) } \right)^{T}_{k \in S_{M}}, 
\end{align*}
where we have used the fact that $\cos \big(\frac{\pi k_{j}}{N} \big)= \cos \big(\phi_{j} h \big)$. 
Thus again, the matrix $z {\mathbf L}_{N-M,M}^{(0)}- {\mathbf L}_{N-M,M}^{(1)}$ has only rank $M-1$ if $z= \cos \big(\phi_{j} h\big)$ for $j \in \{ 1, \ldots , M\}$, where either the first matrix difference 
$$\textstyle {\mathbf C}_{N-M,M_{1}}  \, \textrm{diag} \left( -\gamma_{j} \, \sin \big(\frac{\phi_{j} h}{2} \big)\, \sin ( \phi_{j} hN )\, \big(z-\cos \big( \phi_{j} h \big)\big) \right)_{j=1}^{M_{1}} \, {\mathbf C}_{M,M_{1}}^{T}$$ 
has only rank $M_{1}-1$ (if $j \in \{1, \ldots , M_{1}\}$), or the second matrix difference 
$$ \textstyle  \sum\limits_{j=M_{1}+1}^{M} \epsilon_{j} N\, (-1)^{k_{j}} \, \big(\cos \big(\frac{k_{j} \pi}{2N}\big)\big)^{-1} \, \frac{\gamma_{j}}{2} \, {\mathbf e}_{\mathrm{ind}(k_{j})}  \ \left( \frac{ z-\cos \big(\phi_{j} h\big)}{\cos\big(\frac{\pi \,   k_{j}  }{N}\big) - \cos\big(\frac{\pi k}{N}\big) } \right)^{T}_{k \in S_{M}}$$ 
has only rank $M-M_{1}-1$ instead of $M-M_{1}$ because of one vanishing term (if $z= \cos \big(\phi_{j}h\big)$ for $j \in \{M_{1}+1, \ldots, M\})$.
All further cases can be treated similarly.
\end{proof}

Theorem \ref{theo3} provides now a new approach to reconstruct all frequencies $\phi_{j}$ of $f$ in (\ref{1.1}).
If the number of terms $M$ is known, then we can theoretically use any arbitrary partition $S_{M} \cup \Gamma_{M}$ to build the two Loewner matrices in Theorem \ref{theo3} and to extract the frequencies $\phi_{j} \in [0,K]$ by solving the matrix pencil problem (\ref{mpl}). 

However in order to obtain better numerical stability, we will apply the greedy procedure of the AAA Algorithm \ref{alg2} to build the index sets $S_{M}$ and $\Gamma_{M}$ and use this partition to construct the Loewner matrices.
Moreover, incorporating this preconditioning procedure, we can still determine $M$. The complete algorithm for ESPIRA-II based on the matrix pencil method for Loewner matrices is summarized in Algorithm \ref{alg5}.

\begin{algorithm}[tp]\caption{ESPIRA II}
\label{alg5}
\small{
\textbf{Input:} ${\mathbf f} = (f_{\ell})_{\ell=0}^{N-1} = \big(f\left(\frac{h(2\ell+1)}{2}\right)\big)_{\ell=0}^{N-1}$ (equidistant sampling values of $f$ in (\ref{1.1}))\\
for exact sampling data (reconstruction): $jmax =\lfloor N/2 \rfloor-1$ (upper bound for M), ${tol}>0$ tolerance for the approximation error\\
for noisy sampling data (approximation): $jmax= M+1$ ($M$ wanted length of cosine sum).

\begin{enumerate}
\item Initialization step: \\
 Compute the DCT-II-vector $\hat{\mathbf f}= (\hat{f}_{k})_{k=0}^{N-1}$ with $\hat{f}_{k} = \sum\limits_{\ell=0}^{N-1} f_{\ell} \, \cos \left( \frac{\pi (2\ell+1)k}{2N}  \right)$ of ${\mathbf f}$ and put
$$ \textstyle g_{k}:= (-1)^{k} 
 \left( \cos\left( \frac{\pi k}{2N}  \right) \right)^{-1} \widehat{f}_k, \qquad k=0, \ldots , N-1.$$ 
Set $z_{k}:=\cos(\frac{\pi k}{N})$ for $k =0, \ldots , N-1$.
Set $\mathbf \Gamma \coloneqq \left( k \right)_{k=0}^{N-1}$, ${\mathbf g}_{\mathbf \Gamma} \coloneqq (g_{k})_{k=0}^{N-1}$, ${\mathbf S}\coloneqq []$; ${\mathbf g}_{\mathbf S}\coloneqq []$, ${\mathbf r} \coloneqq (r_{k})_{k=0}^{N-1} \coloneqq {\mathbf 0}$. 
\item Preconditioning step:  \\
for $j=1:$ \textit{jmax}
\begin{itemize}
\item Compute $k \coloneqq \argmax_{\ell \in \mathbf \Gamma} |r_{\ell} - g_{\ell}|$
and update ${\mathbf S} \coloneqq ({\mathbf S}^{T}, k)^{T}$, ${\mathbf g}_{\mathbf S} \coloneqq ({\mathbf g}_{\mathbf S}^{T}, g_{k})^{T}$, and delete $k$ in $\mathbf{\Gamma}$ and $g_{k}$ in ${\mathbf g}_{\mathbf \Gamma}$.
\item Build  $\mathbf{C}_{N-j,j}\!\coloneqq\!\left( \frac{1}{z_{\ell}-z_{k}} \right)_{\ell \in \mathbf{\Gamma}, k \in \mathbf{S}}$, ${\mathbf L}_{N-j,j}\!\coloneqq\! \left( \frac{g_{\ell} - g_{k}}{z_{\ell}-z_{k}} \right)_{\ell \in \mathbf{\Gamma}, k \in \mathbf{S}}$.
\item Compute the right singular vector $
\mathbf{w}_{\mathbf S}$  corresponding to the smallest singular value $\sigma_{j}$ of $\mathbf{L}_{N-j,j}$. If $\sigma_{j} < tol \, \sigma_{1}$ (where $\sigma_{1}$ is the largest singular value of ${\mathbf L}_{N-j,j}$), then 
set $ M \coloneqq j-1$; 
delete the last entry of ${\mathbf S}$ and add it to ${\mathbf \Gamma}$;
stop (the last step for exact sampling data only).
 \item Compute $\mathbf{p} \coloneqq \mathbf{C}_{N-j,j} ({\mathbf  w}_{\mathbf S}.* \mathbf{g}_{\mathbf{S}})$, $\mathbf{q} \coloneqq \mathbf{C}_{N-j,j} {\mathbf w}_{\mathbf S}$ and $\mathbf{r} \coloneqq \mathbf{p}./\mathbf{q}$, where $.*$ denotes componentwise multiplication and $./$ componentwise division.
\end{itemize}
end (for)\\

\item Build the Loewner matrices  
$${\mathbf L}_{N-{M},{M}}^{(0)} = \left( \frac{g_{\ell} - g_{k} }{z_{\ell} - z_{k}} \right)_{\ell \in {\mathbf \Gamma}, k \in {\mathbf S}} , \quad
{\mathbf L}_{N-{M},{M}}^{(1)} = \left( \frac{g_{\ell}  \cos(\frac{\pi \ell}{N}) - g_{k} \cos(\frac{\pi k}{N}) }{z_{\ell} - z_{k}} \right)_{\ell \in {\mathbf \Gamma}, k \in {\mathbf S}},$$
and the joint matrix ${\mathbf L}_{N-{M},2{M}} \coloneqq \left(  {\mathbf L}^{(0)}_{N-{M},{M}} , {\mathbf L}^{(1)}_{N-{M},{M}} \right) \in {\mathbb C}^{N-{M},2{M}}$.

\item
Compute the SVD ${\mathbf L}_{N-{M},2{M}} = {\mathbf U}_{N-{M}} \, {\mathbf D}_{N-{M},2{M}} \, {\mathbf W}_{2{M}}$.\\ 
 Determine the vector of eigenvalues ${\mathbf b} =(b_{1}, \ldots , b_{M})^{T}$ of the matrix pencil \\
 $ z {\mathbf W}_{2M}(1:2M, 1:M) - {\mathbf W}_{2M}(1:2M, M+1:2M)$, 
 or equivalently the vector of eigenvalues of 
$$\left(  {\mathbf W}_{2M}(1:2M, 1:M) \right)^{\dagger}  {\mathbf W}_{2M}(1:2M, M+1:2M), $$
where $\Big({\mathbf W}_{2M}(1:2 M, 1:M)\Big)^{\dagger}$ denotes the Moore-Penrose inverse of  ${\mathbf W}_{2M}(1:2M, 1:M) $.
\item Compute ${\mathbf a} = (a_{j})_{j=1}^{M}$  as the least squares solution of the linear system 
$ \tilde{\mathbf C}_{N,M}  \, {\mathbf a} = {\mathbf g}, $
where $\tilde{\mathbf C}_{N,M} := \left( \frac{1}{z_{k} - b_{j}} \right)_{k=0,j=1}^{N-1,M}$ and ${\mathbf g} =(g_{k})_{k=0}^{N-1}$.
\item  Set $\phi_{j} = \frac{\arccos b_{j}}{h}$ and $\gamma_{j} = \frac{a_{j}}{\sin \frac{\phi_{j} h}{2} \, \sin( \phi_{j} hN)}$ for $j=1, \ldots , M$.
\end{enumerate}

\noindent
\textbf{Output:} $M \in {\mathbb N}$,  $\phi_{j}, \, \gamma_{j} \in {\mathbb R}$, $j=1, \ldots , M$.}
\end{algorithm}

\begin{remark}\label{remvan}
Instead of solving $\tilde{\mathbf C}_{N,M}  \, {\mathbf a} = {\mathbf g}$ in  step 5 of Algorithm \ref{alg5}
we could also directly solve
${\mathbf V}_{N,M} \, {\bgamma} = {\mathbf f}$ as in step 3 of Algorithm  \ref{algESPRIT}.
Indeed, multiplying the system ${\mathbf V}_{N,M} \, {\bgamma} = {\mathbf f}$ with $\textrm{diag} \big((-1)^{k} \big( \cos \big(\frac{\pi k}{2 N} \big)\big)^{-1}  \big)_{k=0}^{N-1}   {\mathbf C}_{N}^{II} $ from the left, where ${\mathbf C}_{N}^{II}$ is the cosine matrix defined by (\ref{CC}), we get
\begin{equation}\label{equi} \textstyle
\textrm{diag} \left((-1)^{k} \big(\cos \big(\frac{\pi k}{2 N}\big) \big)^{-1}  \right)_{k=0}^{N-1}   {\mathbf C}_{N}^{II} \,  {\mathbf V}_{N,M} \, {\bgamma} = \textrm{diag} \left((-1)^{k} \big( \cos \big(\frac{\pi k}{2 N} \big)\big)^{-1}  \right)_{k=0}^{N-1}   {\mathbf C}_{N}^{II} \,  {\mathbf f}.
\end{equation}
Since $\hat{\mathbf f} = {\mathbf C}_{N}^{II} \, {\mathbf  f}$, we get for the right hand side of (\ref{equi})
\begin{align*} \textstyle
 \textrm{diag} \left((-1)^{k} \big(\cos \big( \frac{\pi k}{2 N} \big)\big)^{-1}  \right)_{k=0}^{N-1}   {\mathbf C}_{N}^{II} \,  {\mathbf f} &=   \textstyle \textrm{diag} \left((-1)^{k} \big( \cos \big(\frac{\pi k}{2 N} \big)\big)^{-1}  \right)_{k=0}^{N-1} \hat{\mathbf f}\\
 &=  \textstyle \left((-1)^{k} 
 \big( \cos\big( \frac{\pi k}{2N}  \big) \big)^{-1} \widehat{f}_k\right)_{k=0}^{N-1}.
\end{align*}
For the left hand side of (\ref{equi}), using a similar computation as for $\hat{\mathbf f}$ in Section  \ref{sec31}, we have
\begin{align*}
\textrm{diag} &  \textstyle \left((-1)^{k} \big(\cos \big(\frac{\pi k}{2 N} \big)\big)^{-1}  \right)_{k=0}^{N-1}   {\mathbf C}_{N}^{II} \,  {\mathbf V}_{N,M} \, {\bgamma}\\
 &= \textstyle \textrm{diag} \left((-1)^{k} \big(\cos \big(\frac{\pi k}{2 N}\big) \big)^{-1}  \right)_{k=0}^{N-1}  \left( (-1)^{k} \cos \big(\frac{\pi k}{2N}\big)   \frac{\sin (\phi_j hN) \sin \big(\frac{\phi_j h}{2}\big)}{\cos\big(\frac{k\pi}{N}\big)-\cos\big(\phi_j h\big)}  \right)_{k=0,j=1}^{N-1,M} \, {\bgamma} \\ 
 &= \textstyle \left( \frac{1}{ \cos\big(\frac{k\pi}{N}\big)-\cos\big(\phi_j h\big) } \right)_{k=0,j=1}^{N-1,M} \textrm{diag} \left(  \sin (\phi_j hN) \sin\big(\frac{\phi_j h}{2}\big) \right)_{j=1}^{M} \, {\bgamma} \\ 
 & =\textstyle \left( \frac{1}{ \cos\big(\frac{k\pi}{N}\big)-\cos\big(\phi_j h\big) } \right)_{k=0,j=1}^{N-1,M} \left( \gamma_j \sin (\phi_j hN) \sin\big(\frac{\phi_j h}{2}\big) \right)_{j=1}^{M},
\end{align*}
which proves the equivalency. 

\end{remark}
 
Algorithm \ref{alg5} employs in the preconditioning step the greedy strategy of the AAA algorithm to find a suitable partition of the index set. Afterwords, we need to solve the matrix pencil problem for the Loewner matrices. This is done similarly as in the ESPRIT algorithm \ref{algESPRIT} for Toeplitz$+$Hankel matrices. 
One important advantage of the ESPIRA-II algorithm \ref{alg5} is that we don't longer need to treat integer frequencies in a 
post-processing step but can determine them directly.
 Analogously as for ESPIRA-I,  we have for ESPIRA-II an overall computational cost of ${\mathcal O}(N (M^{3}+\log N))$, where the  computation of DCT-II($N$) requires ${\mathcal O}(N \log N)$ operations, the preconditioning step based on the AAA algorithm requires ${\mathcal O}(N M^{3})$ operations, and the SVD of the Loewner matrix of size  $(N-M, 2 M)$ requires ${\mathcal O}(N M^{2})$ operations.
 For comparison,  ESPRIT in Section 2 requires ${\mathcal O}(N L^{2})$ flops, and  good recovery results are only achieved with $L\approx N/2$, i.e. with complexity ${\mathcal O}(N^{3})$. This computational cost for ESPRIT can be reduced using truncated SVD.
 
 \section{Numerical experiments}
 In this section we compare the performance of all three algorithms, ESPIRA-I, ESPIRA-II, and ESPRIT, for reconstruction of  cosine sums from exact and noisy input data. Further, we apply these algorithms to approximate even Bessel functions by short cosine sums. All algorithms are implemented in \textsc{Matlab} and use IEEE standard floating point arithmetic with double precision. 
The \textsc{Matlab} implementations can be found on \texttt{http://na.math.uni-goettingen.de} under \texttt{software}.  

 We consider short cosine sums as in (\ref{1.1}),  
$$
f(t) = \sum_{j=1}^{M} \gamma_{j} \, \cos(\phi_{j} t),
$$
with $M \in {\mathbb N}$, $\gamma_{j} \in {\mathbb R}\setminus \{ 0\}$, and  the pairwise distinct frequency parameters $\phi_{j}  \in [0, K)$ (with some $K>0$ be given). By $\tilde{f}$, $\tilde{\phi}_j$ and $\tilde{\gamma}_j$ we denote exponential sum, frequencies and coefficients respectively, reconstructed by our algorithms and 
define the relative errors by the following formulas
$$
e(f) \coloneqq \frac{\max|f(t)-\tilde{f}(t)|}{\max|f(t)|}, \quad  e(\bphi) \coloneqq  \frac{\max\limits_{j=1,\ldots,M}|\phi_j-\tilde{\phi}_j|}{\max\limits_{j=1,\ldots,M}|\phi_j|} 
\quad\text{and}\quad e(\gamra) \coloneqq \frac{\max\limits_{j=1,\ldots,M}|\gamma_j-\tilde{\gamma}_j|}{\max\limits_{j=1,\ldots,M}|\gamma_j|},
$$
where $e(f)$
be  the relative error of the exponential sum, such that the maximum is taken over the equidistant points in $[0, \frac{\pi N}{K}]$ with the step size $0.001$,  $e(\bphi)$ and $e(\gamra)$ be
 the relative errors for the  frequencies $\phi_j$ and the coefficients $\gamma_j$ respectively.
 
 \subsection{Reconstruction of exact data}
 First, we consider one example for reconstruction of cosine sums from exact data. In order to get optimal recovery results from the ESPRIT algorithm, we always take the upper bound $L$ for the order of exponential sums $M$ to be $L=N/2$, where $N$ is the number of given function samples. The number of terms $M$ is also recovered by each method.
 \begin{example}\label{ex1}
Let a signal be given by parameters $M=7$, $\gamma_j=j$ for $j=1,...,7$ and
$$
\bphi=\left(\sqrt{20}, \, \sqrt{0.2}, \, \sqrt{5}, \, \sqrt{15}, \, \sqrt{3}, \, \sqrt{15.1}, \, \sqrt{7}  \right).
$$
We reconstruct the signal from $N$ samples $f_{\ell}= \textstyle f\left(\frac{h (2\ell+1)}{2}\right)$, $\ell=0,\ldots,N-1$, with $h=\frac{\pi}{K}$ for different values of $N$ and $K$ (see Table \ref{tb1}), but each time we consider the segment $[0,5\pi]$. We take  $\varepsilon=10^{-10}$ for  ESPRIT and $tol=10^{-13}$ for ESPIRA-I and -II. We see that  for this example all algorithms work equally well.

\begin{table}[h!]
\centering
\caption{\small {Results of Example \ref{ex1}.}}
\label{tb1}
\begin{tabular}{ |p{0.5cm}|p{0.5cm}||p{2cm}|p{2cm}|p{2cm}|  }
 \hline
$N$ & $K$ & $e(f)$  & $e(\bphi)$ & $e(\gamra)$ \\
 \hline
 \multicolumn{5}{|c|}{ESPIRA-I} \\
 \hline
100  & 20  & $1.38\cdot 10^{-14}$    & $6.43\cdot 10^{-13}$ &   $3.08\cdot 10^{-13}$\\
150 & 30  &$1.19\cdot 10^{-13}$  & $3.48\cdot 10^{-11}$  &$3.66\cdot 10^{-12}$\\
200 & 40  &$3.97\cdot 10^{-13}$  & $1.56\cdot 10^{-10}$  &$7.79\cdot 10^{-11}$\\
 \hline
 \multicolumn{5}{|c|}{ESPIRA-II} \\
 \hline
 100   & 20 & $2.88\cdot 10^{-14}$ & $3.64\cdot 10^{-12}$&  $1.82\cdot 10^{-12}$\\
150 & 30  &$3.59\cdot 10^{-14}$  & $7.12\cdot 10^{-12}$  &$3.67\cdot 10^{-12}$\\
200 & 40 &   $4.86\cdot 10^{-14}$  & $7.47\cdot 10^{-12}$&$3.66\cdot 10^{-12}$\\
  \hline
 \multicolumn{5}{|c|}{ESPRIT} \\
 \hline
100  & 20  &$2.88\cdot 10^{-14}$    &$6.66\cdot 10^{-14}$&   $9.73\cdot 10^{-14}$\\
150 & 30   &$3.29\cdot 10^{-14}$  & $9.28\cdot 10^{-13}$  &$4.64\cdot 10^{-13}$\\
200 & 40   &$6.23\cdot 10^{-14}$  & $2.72\cdot 10^{-12}$  &$1.36\cdot 10^{-12}$\\
 \hline
\end{tabular}
\end{table}
\end{example}

\subsection{Reconstruction of noisy data}

In this subsection we consider reconstruction of noisy data, where we assume that the given signal values $f_k$ from (\ref{1.1}) are  corrupted with additive noise, i.e., the measurements are of the form 
$$ y_{k} = \textstyle f_k + \epsilon_{k} = f\Big(\frac{h(2k+1)\pi}{2} \Big) + \epsilon_{k}, \qquad k=0, \ldots , N-1,$$
where  $\epsilon_{k}$ are assumed to be from a uniform distribution with mean value $0$.  We will compare the performance of ESPRIT and ESPIRA-II. 

\begin{example}\label{ex3}

Let $f$ be given as in Example \ref{ex1}. We employ $N$ noisy signal values $y_{k}=f_k +\epsilon_{k}$, $k=0, \ldots, N-1$, where the random noise
$(\epsilon_{k})_{k=0}^{N-1}$ is generated in \textsc{Matlab} by 
\texttt{20*(rand(N,1)-0.5)}, i.e., we take uniform noise in the interval $[-10,10]$ with SNR (signal-to-noise ratio) and PSNR (peak signal-to-noise ratio) values as in Table \ref{tab_snr}.
We employ $N=1600$ or $N=2000$  equidistant measurements  with stepsize  $h=\frac{\pi}{50}$, i.e., $K=50$. In the first case  we use values on the interval $[0, 32\pi)$, in the second case on $[0, 40\pi)$.
We compute $10$ iterations and give the reconstruction errors for ESPRIT and for ESPIRA-II in Table \ref{tab_noise1}.
In the two algorithms ESPRIT and ESPIRA-II we assume that $M=7$ is known beforehand and modify the algorithms accordingly. For ESPRIT we compute the SVD decomposition in step 1 of Algorithm \ref{algESPRIT} but do not compute the numerical rank of ${\mathbf M}_{N-L+2, L}$ and instead take the known $M=7$ in step 2. For ESPIRA-II we set $jmax=M+1=8$ in Algorithm \ref{alg5}.
Further, for ESPIRA-II we use only the data $g_{k}$, $k=0, \ldots , \frac{N}{2}-1$ in order to avoid amplification of the error by the factor $\cos(\frac{\pi k}{2N})^{-1}$ in  the definition of $g_{k}$ in Algorithm \ref{alg5}.

\begin{table}[h!]
\centering
 \caption{\small SNR and PSNR values of the given noisy data with additive i.i.d.\ noise drawn from uniform distribution in $[-10,10]$ in Example \ref{ex3}}
  \label{tab_snr}
  
\begin{tabular}{ |p{1.2cm}|p{1.2cm}| p{1.2cm}||p{1.2cm}|p{1.2cm}| p{1.2cm}| }
 \hline
    \multicolumn{3}{|c||}{SNR} & \multicolumn{3}{|c|}{PSNR}\\
   \hline
 min & max & average & min & max & average\\[2mm]
 \hline
\ 3.96   & \ 4.14 & \ 4.07 & 13.57    & 13.76 & 13.68 \\[2mm]
 \hline
 \end{tabular}
\end{table}

As it can be seen in Table \ref{tab_noise1}, the two algorithms ESPRIT and ESPIRA-II  provide almost equally good results (ESPIRA-II works slightly better). 
In Figure \ref{plotnoise1}, we present the reconstruction results of the original function from the noisy data for  ESPRIT  and for ESPIRA-II. We plot the graphics and compute the relative errors $e(f)$  in the segment $[0,10]$. 

\begin{table}[h!]
\centering
 \caption{\small Reconstruction error for noisy data with additive i.i.d.\ noise drawn from uniform distribution in $[-10,10]$ in Example \ref{ex3}.}
  \label{tab_noise1}
\begin{tabular}{ |p{1.8cm}||p{1.6cm}|p{1.6cm}| p{1.6cm}||p{1.6cm}|p{1.6cm}| p{1.6cm}| }
 \hline
   & \multicolumn{3}{|c||}{ESPRIT} & \multicolumn{3}{|c|}{ESPIRA-II}\\
   \hline
$N =1600$ & min & max & average & min & max & average\\[2mm]
 \hline
$e(\bphi)$  & $1.11$e--$00$    &$9.94$e--$00$ & $5.49$e--$00$ & $1.31$e--$01$    &$4.19$e--$00$ & $8.67$e--$01$ \\[2mm]
$e(\gamra)$ & $3.12$e--$01$    & $4.16$e--$01$ & $3.57$e--$01$ & $1.85$e--$01$ & $3.72$e--$01$  & $2.98$e--$01$\\[2mm]
$e(f)$ & $1.37$e--$01$    & $2.37$e--$01$ & $1.73$e--$01$ & $7.24$e--$02$ & $1.26$e--$01$  & $9.83$e--$02$\\[2mm]
 \hline
 \end{tabular}
 \begin{tabular}{ |p{1.8cm}||p{1.6cm}|p{1.6cm}| p{1.6cm}||p{1.6cm}|p{1.6cm}| p{1.6cm}| }
 \hline
   & \multicolumn{3}{|c||}{ESPRIT} & \multicolumn{3}{|c|}{ESPIRA-II}\\
   \hline
$N =2000$ & min & max & average & min & max & average\\[2mm]
 \hline
$e(\bphi)$  & $2.74$e--$01$    &$9.28$e--$00$ & $5.23$e--$00$ & $4.24$e--$04$    &$3.65$e--$01$ & $2.28$e--$01$ \\[2mm]
$e(\gamra)$ & $2.47$e--$01$    & $3.47$e--$01$ & $3.01$e--$01$ & $3.32$e--$02$ & $3.32$e--$01$  & $2.51$e--$01$\\[2mm]
$e(f)$ & $1.33$e--$01$    & $1.91$e--$01$ & $1.68$e--$01$ & $3.68$e--$02$ & $1.40$e--$01$  & $1.01$e--$01$\\[2mm]
 \hline
 \end{tabular}
\end{table}

\begin{figure}[h!]
\includegraphics[width=7.0cm]{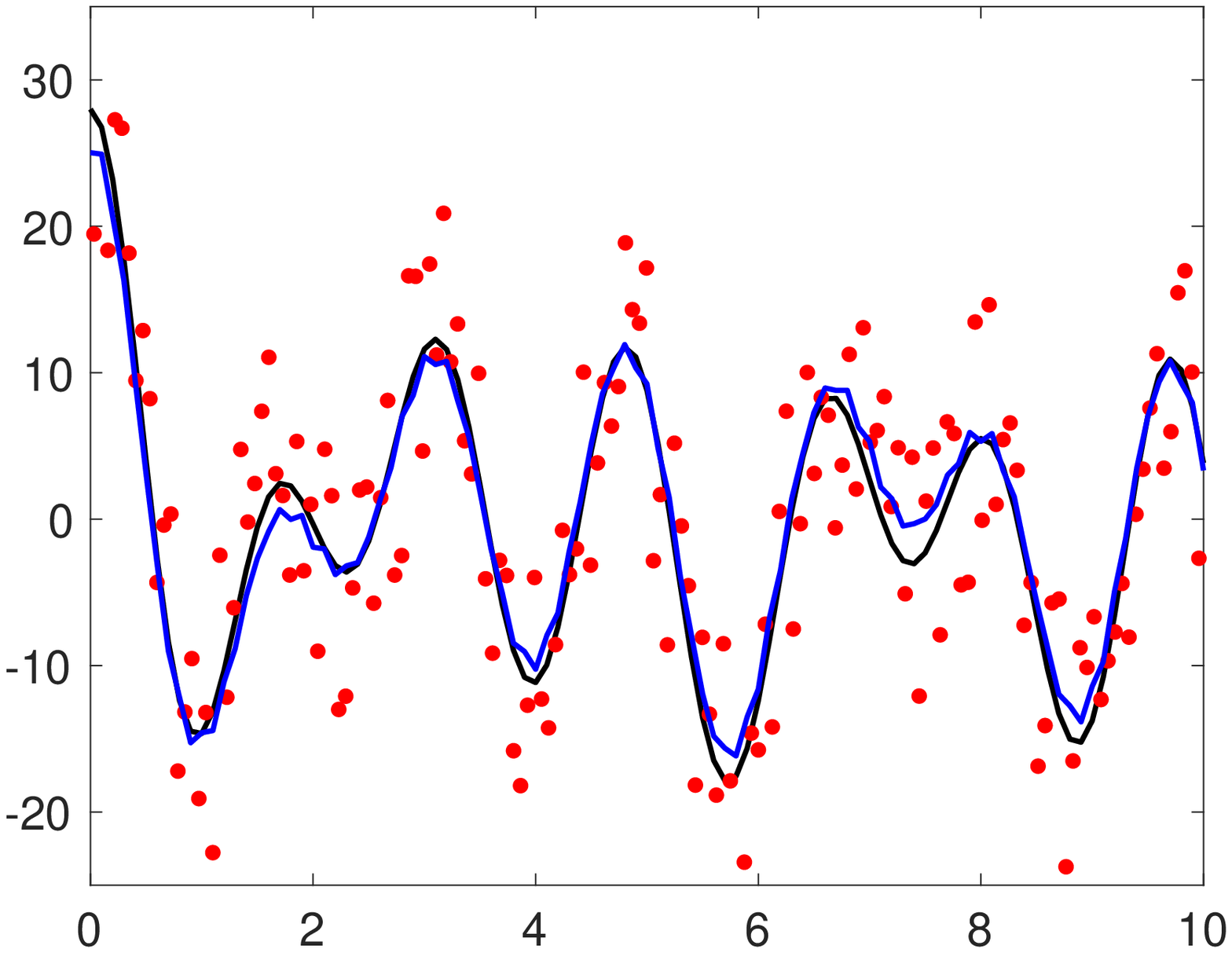}
\includegraphics[width=7.0cm]{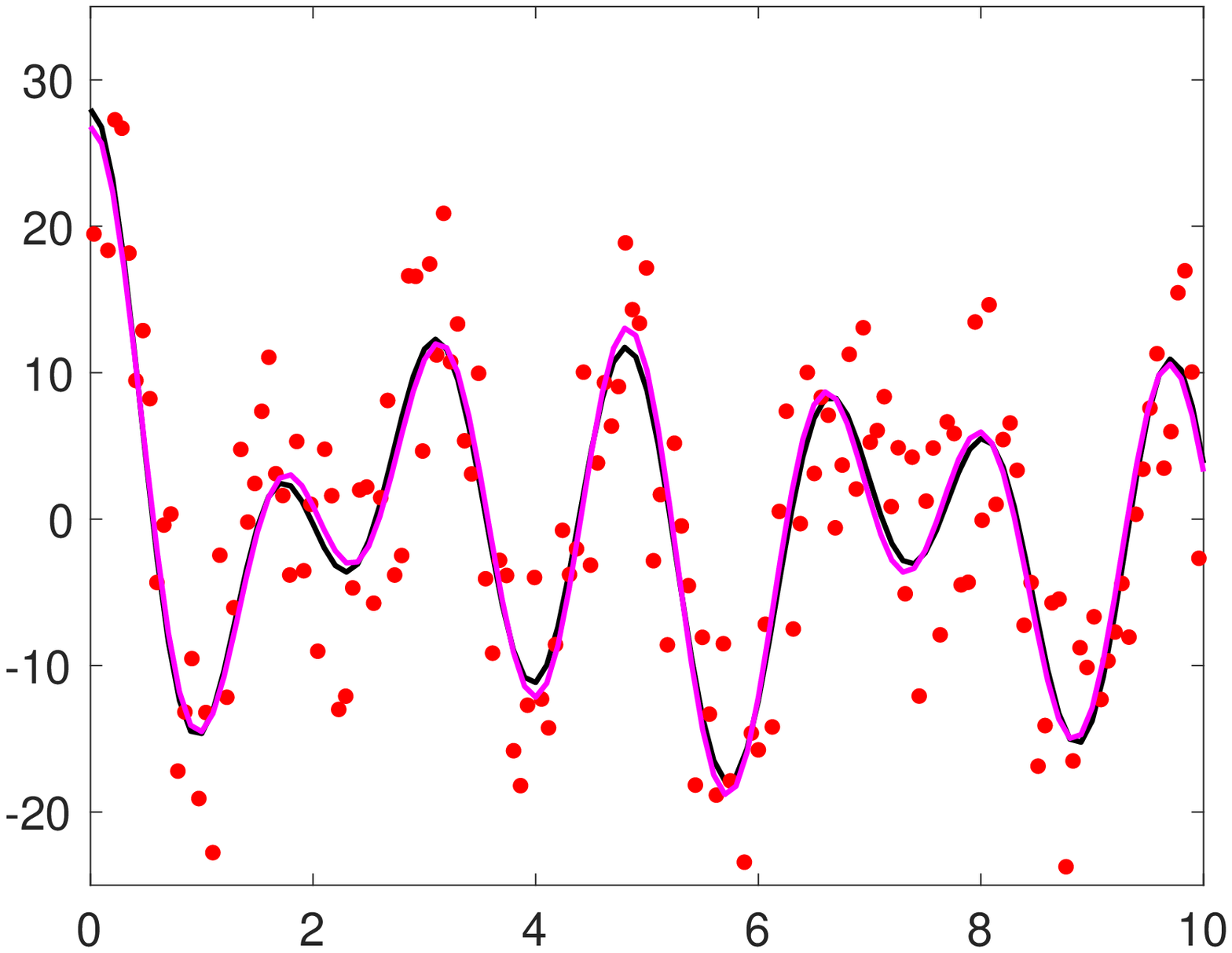}
\centering
\caption{\small Plot of the original function $f(t)$ (black), the given noisy data (red dots), and the achieved reconstructions restricted to the interval $[0, 10]$. Left: reconstruction by ESPRIT (blue). Right: reconstruction by ESPIRA-II (magenta). }
\label{plotnoise1}
\end{figure}
\end{example}

Similarly good reconstruction results are obtained from noisy input data for the case where $\epsilon_{k}$ are i.i.d.\ random  variables drawn from a standard normal distribution with mean value $0$.

\subsection{Application of ESPIRA to approximation of the Bessel function} 

We consider one example for approximation of an even Bessel function by short cosine sums by three methods ESPIRA-I and -II and ESPRIT. 
The algorithms are implemented in \textsc{Matlab} and use IEEE standard floating point arithmetic with double precision. 
 
By $J_n(t)$, $t \in \rr$, we denote the Bessel function  of the first kind of order $n \in \nn$. The power series expansion of this function is given by
$$
J_{n}(t)=\sum\limits_{k=0}^{\infty} \frac{(-1)^{k}}{k! \, (n+k)!} \left(\frac{t}{2}\right)^{2k+n}, \ t\geq 0.
$$
Similarly as in \cite{Cu2020},  we define a modified Bessel function on the interval $[0,\, B]$ by
\begin{equation}\label{besmod}
J_n(B,t)=\frac{B}{t} \, J_{n}(t), \ \ 0<t\leq B.
\end{equation}
Then $J_{2m+1}(B,t)$, $m=0,1,2,\ldots$, are even functions and therefore can be efficiently approximated by cosine sums. 

First we take $m=1$ and $B=126$ and consider the approximation error
\begin{equation}\label{errbes}
\textstyle \max\limits_{t \in [0, 126]}\left| J_3\left(126, t \right)-\sum\limits_{j=1}^{25} \gamma_j \cos(\phi_j t)  \right|.
\end{equation}

\begin{figure}[h]
\includegraphics[width=7.0cm]{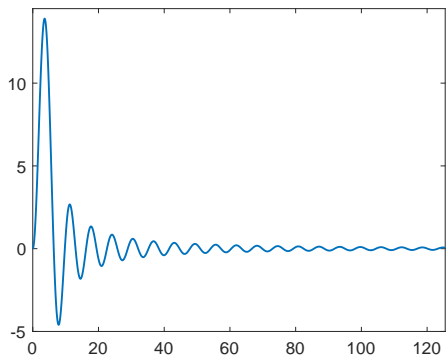} 
\centering
\caption{\small Bessel function $J_{3}(B,t)$ on the interval $[0,B]$ with $B=126$. }
\label{bes}
\end{figure}

We apply $N=400$ values of the function $J_{3}\left(126,t \right)$ at points $t_\ell =\frac{h(2 \ell+1)}{2 }$ for $\ell=0,1,\ldots,399$, with $h=\frac{\pi}{10}$, i.e., $t_{\ell} \in (0, 40 \pi) \subset (0, 126)$. 
For ESPRIT we use $L=200$. 
The maximum approximation error in (\ref{errbes}) provided by ESPIRA-I, ESPRIT and ESPIRA-II  are $1.18\cdot 10^{-6}$,  $1.78\cdot 10^{-6}$, and $4.28\cdot 10^{-6}$  respectively (see Fig. \ref{fig3}).

\begin{figure}[h]
\centering
\begin{minipage}[h]{0.45\linewidth}
\includegraphics[width=1.1\linewidth]{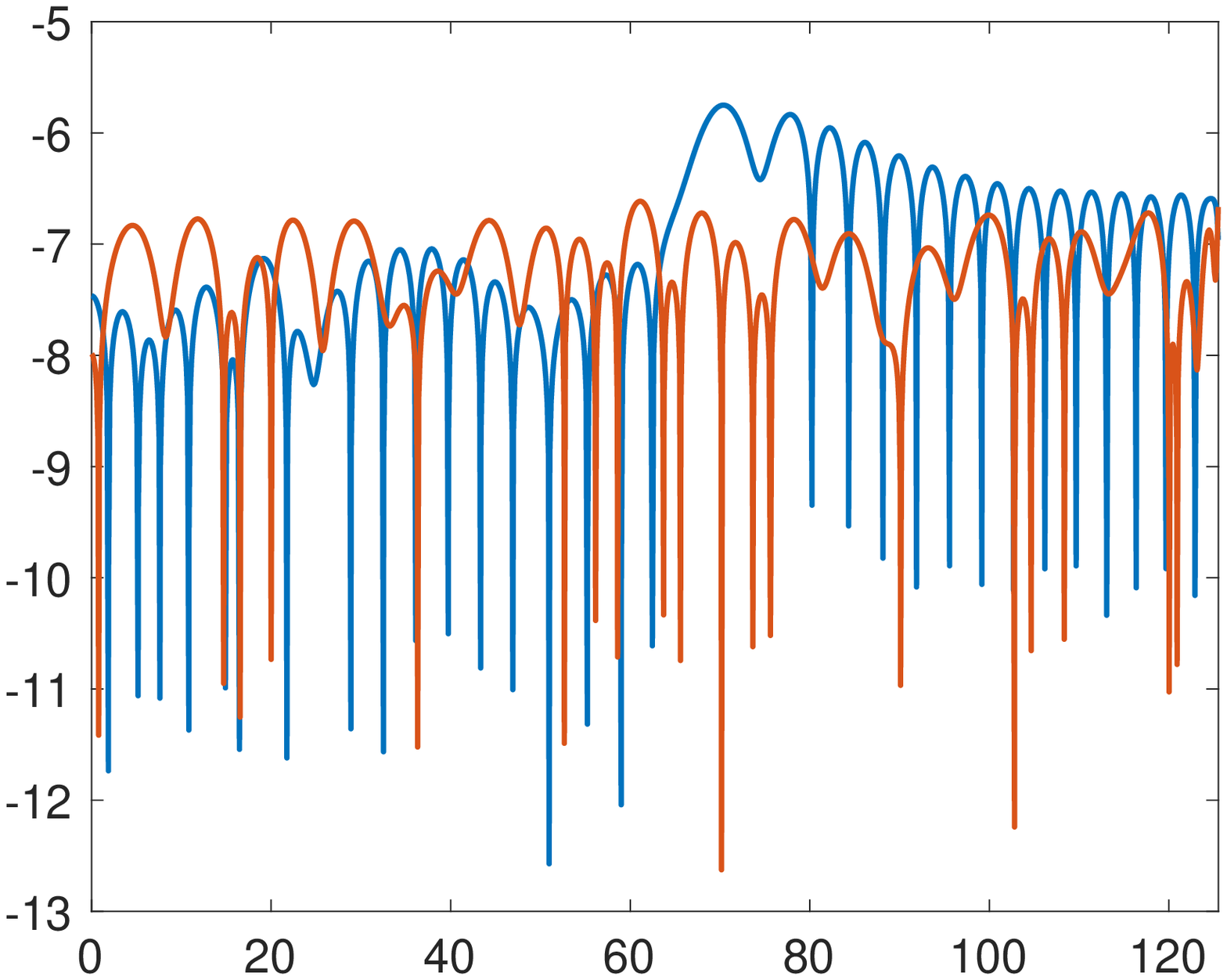}
\end{minipage}
\hfill
\begin{minipage}[h]{0.45\linewidth}
\includegraphics[width=1.1\linewidth]{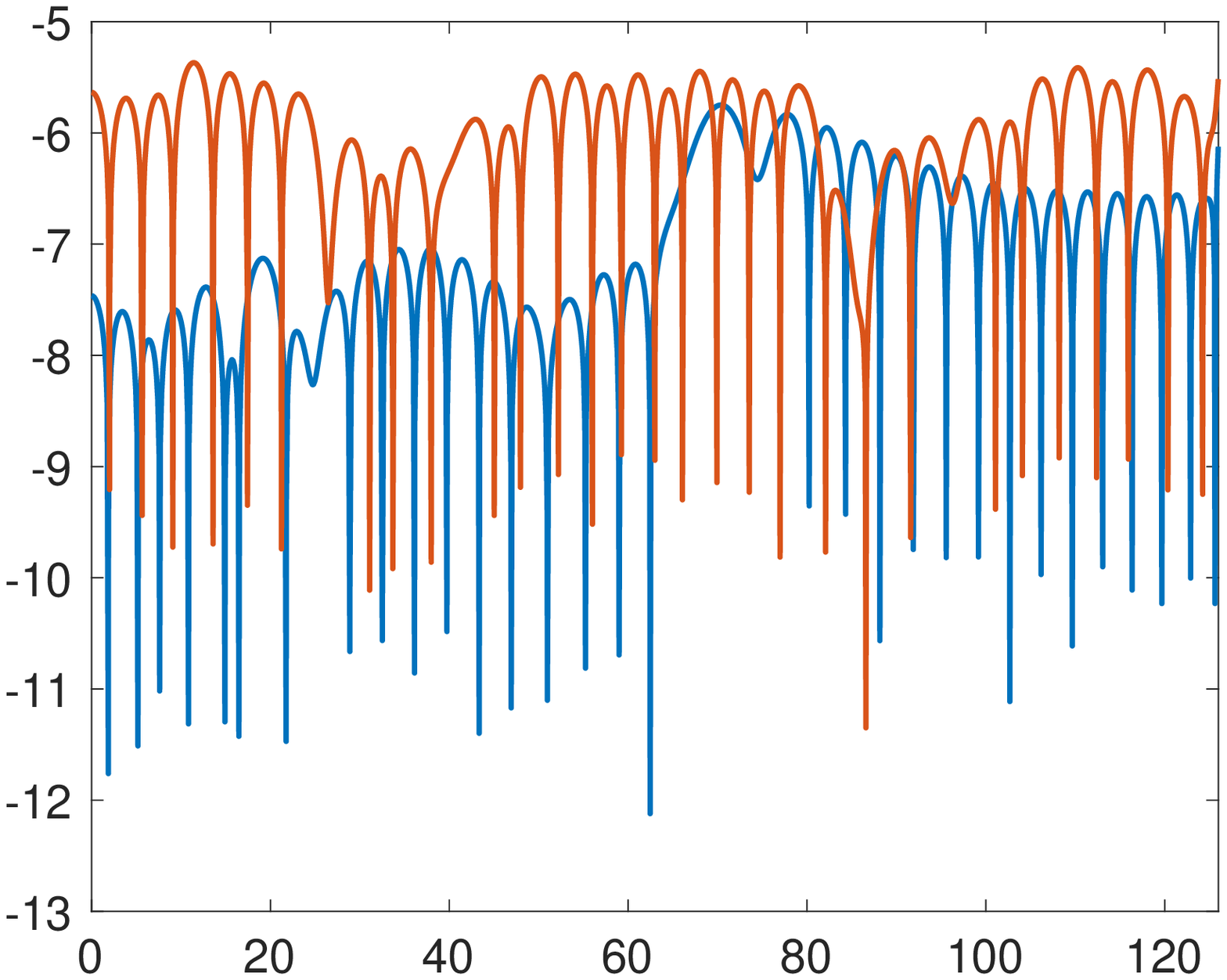}
\end{minipage}
\caption{\small The approximation error in (\ref{errbes}) in logarithmic scale obtained using a cosine sum with $M=25$ terms. Left: with ESPIRA-I (red) and ESPRIT (blue),
Right: with  ESPIRA-II (red) and ESPRIT (blue).}
\label{fig3}
\end{figure} 

In \cite{Cu2020}, high precision arithmetics  with 800 to 2200 digits  has been amployed  to find  approximations of $J_{0}(B,t)$ by sparse cosine sums for $B=1,5, 20$.  Our results  have  been obtained in double precision arithmetics for $B=126$ and are therefore not comparable.
However,  similarly as in \cite{Cu2020} we observe that the found frequencies $\phi_j$, $j=1,\ldots,25$, in (\ref{errbes}) have special locations. They are real and lie in the interval $[0,1]$, and their distribution becomes denser towards the point 1, see  Figure \ref{besfreq}.

\begin{figure}[h]
\includegraphics[width=0.8\linewidth]{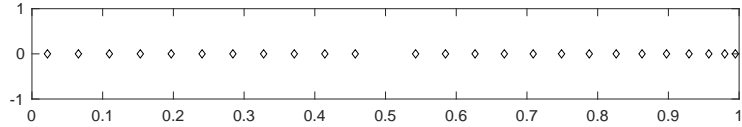} 
\centering
\caption{\small Distribution of the frequencies $\phi_j$, $j=1,\ldots,25$ in (\ref{errbes}) computed with ESPIRA-I. }
\label{besfreq}
\end{figure}

\section{Conclusions}

In this paper we have proposed three algorithms, ESPRIT, ESPIRA-I and ESPIRA-II, for stable recovery or approximation of functions by  cosine sums of the form
 $$
 f(t) = \sum_{j=1}^{M} \gamma_{j} \, \cos(\phi_{j} t),
 $$ 
where $M \in {\mathbb N}$, $\gamma_{j} \in {\mathbb R}\setminus \{ 0\}$, and  where $\phi_{j}  \in [0, K)$  are pairwise distinct.  We use  $N$ samples $f_{k} = f(\frac{\pi(2k +1)}{2K})$, $k=0, \ldots, N-1$, with $N > 2M$ to compute all parameters of this function model.\\
 Our new ESPRIT algorithm differs from similar methods considered in \cite{KP21} and \cite{PT2014}. We show that the values $\cos(\phi_j h)$ can be found as eigenvalues of a matrix pencil problem with Toeplitz+Hankel matrices. \\
 The new ESPIRA algorithm for cosine sums is based on the observation that the  DCT-II coefficients corresponding to the given vector of function values possesses a special rational structure if the frequencies satisfy 
$\phi_j \not\in \frac{\pi}{hN}\zz$ for $j=1, \ldots , M$. Therefore, the problem of parameters estimation can be reformulated as a rational interpolation/approximation problem. We have applied the stable AAA algorithm for iterative rational approximation. Frequency parameters $\phi_j \in \frac{\pi}{hN}\zz$ are reconstructed in a post-processing step. \\
We have shown that the ESPIRA approach can be reformulated  
as  a matrix pencil problem applied to Loewner matrices. This observation leads to a new version of the algorithm, called ESPIRA-II,
which has the advantage that case study of frequencies $\phi_j $ (and a corresponding post-processing step) is no longer needed.

All three methods provide good reconstruction results for noisy input data and construction of cosine sums of good approximations for even Bessel functions. Our numerical experiments show that all  algorithms work almost equally good, while ESPIRA-II gives slightly better results for reconstruction of noisy data.

\section*{Acknowledgement}
The authors gratefully acknowledge support by the German Research Foundation in the framework of the RTG 2088. The second author acknowledges support by the EU MSCA-RISE-2020 project EXPOWER. 

\small

\end{document}